\documentclass[a4paper, 11pt, twoside, notitlepage]{amsart}
\usepackage{graphicx,xcolor,amssymb} 
\usepackage{amsmath,amscd}
\usepackage{amssymb}
\usepackage{amsthm}
\usepackage{appendix}
\usepackage{comment}
\usepackage{hyperref}
\hypersetup{
linkcolor=black,
citecolor=black
}
\usepackage[
  backend=biber,
  style=alphabetic, 
  doi=true,
  url=false,
  eprint=true
]{biblatex}

\AtEveryBibitem{%
  \iffieldundef{doi}
    {}
    {\clearfield{url}\clearfield{urldate}}%
}
\title[Improved stability of low Fourier modes]{Improved stability of low Fourier modes in inverse problems for potentials}

\dedicatory{Dedicated to Roman Novikov on the occasion of his 60th birthday}

\author{Tony Liimatainen}
\address{Department of Mathematics and Statistics, University of Jyv\"askyl\"a, Jyv\"askyl\"a, Finland}
\email{tony.t.liimatainen@jyu.fi}

\author{Mikko Salo}
\address{Department of Mathematics and Statistics, University of Jyv\"askyl\"a, Jyv\"askyl\"a, Finland}
\email{mikko.j.salo@jyu.fi}

\author{William Trad}
\address{Department of Mathematics and Statistics, University of Jyv\"askyl\"a, Jyv\"askyl\"a, Finland}
\email{william.n.k.trad@jyu.fi}

\newtheorem{theorem}{Theorem}[section]

\newtheorem{lemma}[theorem]{Lemma}
\newtheorem{proposition}[theorem]{Proposition}

\theoremstyle{definition}

\theoremstyle{remark}
\newtheorem{remark}{Remark}[section]

\newcommand{\f}[1]{\footnote{\textcolor{blue}{#1}}}
\newcommand{\R}{{\mathbb R}}

\newcommand {\p} {\partial}

\newcommand{\norm}[1]{\lVert #1 \rVert}

\begin{document}

\maketitle

\begin{abstract}
We prove Lipschitz, sub-H\"older and H\"older stability estimates for recovering the low 
Fourier modes of an unknown potential from the Dirichlet-to-Neumann (DN) map. We study three different cases, depending on the regularity of the difference $q_1-q_2$. 

First, we consider 
\[
(-\Delta - \lambda^2 + q)u=0 \quad \text{in} \quad \Omega\subset \R^n.
\]
We show that the difference 
$q_1-q_2$, assumed to be $M$-bandlimited, can be recovered in a Lipschitz 
stable way from the difference of the corresponding DN maps. This holds 
whenever $\lambda$ is sufficiently large relative to $M^{n/2}$. The 
proof involves real geometrical optics solutions.

Secondly, we consider 
\[
(-\Delta + q)u=0 \quad \text{in} \quad (-\pi,\pi)^n.
\]
We show that the low Fourier 
coefficients of the difference $q_1-q_2$, assumed to be real-analytic and periodic, can be recovered in a 
sub-H\"older stable way from the difference of the corresponding DN maps. The 
number of recoverable Fourier modes grows as the DN maps become closer. 

Finally, we consider the case where the Fourier coefficients of the difference $q_1-q_2$ decay at a super-exponential rate $e^{-c|k|^{n/2}}$. We prove that the low Fourier modes can be recovered with H\"older stability, with the number of recoverable modes tending to infinity as the DN maps become closer.

In all cases the $L^\infty$ potentials themselves do not need to satisfy additional assumptions or belong to a finite dimensional space. The constants in the stability estimates are uniform in the number of recovered Fourier modes.

\end{abstract}

\section{Introduction}

In this article we study stability aspects of inverse problems for elliptic equations, with a focus on understanding which features of an unknown potential lead to more stable recovery. Our work is motivated by the increasing stability phenomenon. Starting with the work of Isakov \cite{isakov_increasing_2011}, it has been understood that in the presence of a large frequency $\lambda$, one can obtain stability estimates that improve as $\lambda$ increases. Most results in this direction establish conditional stability estimates where the modulus of continuity consists of a Hölder part and a logarithmic part, with the latter formally vanishing as the frequency tends to infinity. 

The recent article \cite{kow_increasing_2025} studied the related notion of \emph{increasing resolution}. The key insight is that at large frequencies $\lambda$, certain features of the unknown, such as individual Fourier coefficients, can be recovered in a Lipschitz stable manner, and the number of such stably recoverable features grows with $\lambda$. This was demonstrated in \cite{kow_increasing_2025} for linearized inverse scattering problems by estimating the singular values of the linearized forward operator. Nonlinear inverse problems, however, require new ideas beyond singular value estimates.

In the present work, we study stable recovery of low Fourier modes in the nonlinear inverse problem for a potential under three different regularity assumptions on the potential difference. For bandlimited differences at large frequencies, we obtain Lipschitz stability with increasing resolution. For real-analytic and super-exponentially decaying differences, we obtain sub-Hölder and Hölder stability respectively, with the number of recoverable modes growing as the measurement error decreases. In all cases, only the difference of the potentials needs to satisfy the regularity assumption.

\subsection{Statement of results}

Let $\Omega \subset \mathbb{R}^n$ be a bounded open set with smooth boundary, and let $q \in L^{\infty}(\Omega)$. We mostly work on  $\Omega = (-\pi,\pi)^n$, as this is the natural setting for Fourier expansions. 

We first consider the elliptic boundary value problem
\begin{align}
    \begin{cases}
        (-\Delta-\lambda^2+q)u=0 & \text{in } \Omega, \\ 
        u = f & \text{on } \partial \Omega,
    \end{cases}\label{eq:bvp}
\end{align}
where $\lambda > 0$ is such that the Dirichlet-to-Neumann map 
\[
\Lambda_q(\lambda): H^{1/2}(\partial \Omega) \ni f \mapsto \partial_\nu u|_{\partial \Omega} \in H^{-1/2}(\partial \Omega)
\]
is well-defined. When $\lambda = 0$, we simply write $\Lambda_q$.

Our first result addresses the case where the difference of two potentials is bandlimited. We say that a function $q \in L^{\infty}(\Omega)$ is $M$-\emph{bandlimited} if it can be expressed as
\[
q = \sum_{k \in \mathbb{Z}^n, \, |k| \leq M} q_k e^{ik \cdot x} \Big|_{\Omega}
\]
for some coefficients $q_k \in \mathbb{C}$.

\begin{theorem}[Bandlimited difference]\label{thm_main1}
Let $\Omega = (-\pi, \pi)^n$, $n \geq 2$, and fix $E \geq 1$. There exists a constant $C = C(\Omega, n, E) \geq 1$ such that the following holds. For any $M \geq 1$ and any potentials $q, q' \in L^{\infty}(\Omega)$ satisfying $
\|q\|_{L^{\infty}(\Omega)} \leq E$, $\|q'\|_{L^{\infty}(\Omega)} \leq E$, 
with $q - q'$ being $M$-bandlimited, we have
\[
\sup_{|k| \leq M} |q_k - q_k'| \leq C \|\Lambda_q(\lambda) - \Lambda_{q'}(\lambda)\|_{L^2(\partial \Omega) \to L^2(\partial \Omega)},
\]
whenever $\lambda \geq C M^{n/2}$. 
\end{theorem}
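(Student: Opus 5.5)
We will use the Alessandrini identity together with real geometrical optics (plane-wave type) solutions at frequency $\lambda$. For $u_1$ solving $(-\Delta-\lambda^2+q)u_1=0$ and $u_2$ solving $(-\Delta-\lambda^2+q')u_2=0$ in $\Omega$, we have
\[
\int_\Omega (q-q')u_1u_2\,dx=\int_{\partial\Omega}(\Lambda_q(\lambda)-\Lambda_{q'}(\lambda))u_1|_{\partial\Omega}\,u_2|_{\partial\Omega}\,dS .
\]
The right-hand side is bounded by $\|\Lambda_q(\lambda)-\Lambda_{q'}(\lambda)\|\,\|u_1\|_{L^2(\partial\Omega)}\|u_2\|_{L^2(\partial\Omega)}$. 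For a fixed $k$ with $|k|\le M$, we choose unit vectors $\omega_1,\omega_2$ with $\lambda\omega_1+\lambda\omega_2=k$. This is possible once $\lambda\ge |k|/2$: take $\omega_{1,2}=\frac{k}{2\lambda}\pm\sqrt{1-|k|^2/4\lambda^2}\,\theta$ with $\theta\perp k$. We then look for $u_j=e^{i\lambda\omega_j\cdot x}+r_j$, where $r_j$ solves $(-\Delta-\lambda^2+q)r_1=-qe^{i\lambda\omega_1\cdot x}$ (and similarly for $r_2$ with $q'$). The product $e^{i\lambda\omega_1\cdot x}e^{i\lambda\omega_2\cdot x}=e^{ik\cdot x}$, so its pairing with $q-q'$ gives $(2\pi)^n\overline{(q-q')_{-k}}$ up to conjugation conventions. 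Replacing $k$ by $-k$ isolates $q_k-q'_k$.

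The correctors are built by the standard large-$\lambda$ resolvent estimate on a slightly larger cube or torus. One option is to conjugate and use $\|(-\Delta-\lambda^2-2i\lambda\omega\cdot\nabla\ldots)^{-1}\|$. Simpler is to use a periodic resolvent after perturbing $\lambda$ slightly, or the estimate $\|r_j\|_{L^2(\Omega)}\le C\lambda^{-1}\|q\|_{L^\infty}$ of Hähner type on the torus $(\mathbb{R}/4\pi\mathbb{Z})^n$. This gives $\|r_j\|_{L^2}\lesssim E/\lambda$ with $\nabla r_j$ controlled by $E$, so the boundary traces satisfy $\|u_j\|_{L^2(\partial\Omega)}\le C$ uniformly in $\lambda$. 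In the identity, the error terms are
\[
\int (q-q')\bigl(e^{i\lambda\omega_1\cdot x}r_2+r_1e^{i\lambda\omega_2\cdot x}+r_1r_2\bigr)\,dx .
\]

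The crucial point is not to bound these errors by $E/\lambda$. That would only give $|q_k-q_k'|\le C\|\Lambda-\Lambda'\|+CE/\lambda$, which is not Lipschitz. Instead we bound them by $\|q-q'\|_{L^2}/\lambda$, using $\|q-q'\|_{L^\infty}\cdot\|r_j\|\lesssim\ldots$ in the form $\|q-q'\|_{L^2}\|r_j\|_{L^\infty\text{ or }L^2}$. For this we need the $r_j$ to be $O(1/\lambda)$ in $L^2$ and $e^{i\lambda\omega\cdot x}$ to be bounded. Since $q-q'$ is $M$-bandlimited, Parseval gives
\[
\|q-q'\|_{L^2}\le (2\pi)^{n/2}\#\{|k|\le M\}^{1/2}\sup_{|k|\le M}|q_k-q'_k|\le C M^{n/2}\sup_{|k|\le M}|q_k-q_k'| .
\]
Writing $S=\sup_{|k|\le M}|q_k-q'_k|$ and taking the sup over $k$, we get
\[
S\le C\|\Lambda_q(\lambda)-\Lambda_{q'}(\lambda)\|+C\frac{M^{n/2}}{\lambda}S .
\]
For $\lambda\ge 2CM^{n/2}$ the last term is absorbed, which yields the theorem. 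Note also that $\lambda\ge CM^{n/2}\ge M/2$, so the choice of $\omega_j$ is available.

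The main obstacle is making the error bound genuinely linear in $q-q'$ with only $1/\lambda$ decay and uniform constants. The term $r_1r_2$ involves $q$ and $q'$ separately, so $\|r_1r_2\|$ must be controlled in $L^2$, or the pairing must be handled via $\|q-q'\|_{L^2}\|r_1\|_{L^4}\|r_2\|_{L^4}$. The plan is to use a Fourier-multiplier resolvent bound on the torus $(\mathbb{R}/2\pi\mathbb{Z})^n$ with a shifted frequency, of the form $\|(-\Delta-\lambda^2)^{-1}\|_{L^2\to L^2}\lesssim 1/\lambda$ away from eigenvalues. We also need to control the Dirichlet problem being well-posed, and to show that the boundary traces of the $r_j$ (via $H^1$ bounds of size $O(1)$) remain uniformly bounded.
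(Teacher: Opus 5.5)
Your architecture is the paper's. You use the Alessandrini identity, real geometrical optics solutions whose phases add up to $k$, Parseval to make the linear error terms $\lesssim \lambda^{-1}\|q-q'\|_{L^2}\lesssim M^{n/2}\lambda^{-1}S$, and absorption for $\lambda\gtrsim M^{n/2}$. Your treatment of $e^{i\lambda\omega_1\cdot x}r_2$ and $r_1e^{i\lambda\omega_2\cdot x}$ gives the same estimate as the paper's Cauchy--Schwarz over $|l|\le M$.

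\textbf{First gap: the quadratic term.} You flag the $r_1r_2$ term as the main obstacle but leave it open. The routes you suggest ($L^2$ control of $r_1r_2$, or $L^4$ bounds on $r_j$) are not supported by the bounds the construction gives. From $\|r_j\|_{L^2}\lesssim\lambda^{-1}$ and $\|r_j\|_{H^1}\lesssim 1$ (even adding $\|r_j\|_{H^2}\lesssim\lambda$), interpolation and Sobolev give only $\|r_j\|_{L^4}\lesssim\lambda^{n/4-1}$. Hence $\|r_1r_2\|_{L^2}\lesssim\lambda^{n/2-2}$, which is not $O(\lambda^{-1})$ once $n\ge 3$. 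The missing idea is that $\|q-q'\|_{L^2}$ is the wrong norm here. Instead estimate $\bigl|\int_\Omega (q-q')e^{\pm ik\cdot x}r_1r_2\,dx\bigr|\le\|q-q'\|_{L^\infty}\|r_1\|_{L^2}\|r_2\|_{L^2}$, and use bandlimitedness in the form $\|q-q'\|_{L^\infty}\le\sum_{|l|\le M}|q_l-q'_l|\le CM^nS$. This bound is still linear in $S$. The worse factor $M^n$ is paid for by the $\lambda^{-2}$ coming from two correctors, so the term is at most $CM^n\lambda^{-2}S\le CM^{n/2}\lambda^{-1}S$ once $\lambda\ge M^{n/2}$. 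It is then absorbed together with the linear terms; this is exactly what the paper does.

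\textbf{Second gap: constructing the correctors.} The bound you plan to use, $\|(-\Delta-\lambda^2)^{-1}\|_{L^2\to L^2}\lesssim 1/\lambda$ on a torus or cube, is false at real frequency, with or without a Floquet shift $\theta$. The norm equals $1/\min_m\bigl||m+\theta|^2-\lambda^2\bigr|$. Take $m=(j,l,0,\dots,0)$ with $\lambda-(j+\theta_1)=O(1)$ and $l\lesssim\sqrt{\lambda}$ chosen appropriately. Then $\bigl||m+\theta|^2-\lambda^2\bigr|\lesssim\lambda^{1/2}$, so the norm is at least $c\lambda^{-1/2}$.

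You also cannot perturb $\lambda$, since the DN maps are given at that frequency. Hähner's shifted-lattice estimate gains $|\operatorname{Im}\zeta|^{-1}$ only for complex phases. Those solutions grow exponentially, which is precisely what must be avoided to get an $M$-independent constant.

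The right tool, used in Proposition~\ref{prop_go_solutions}, is the limiting absorption (Agmon) estimate for the outgoing free resolvent on $\R^n$ in weighted spaces: $\lambda\|(-\Delta-\lambda^2)^{-1}\|_{L^2_\delta\to L^2_{-\delta}}+\|(-\Delta-\lambda^2)^{-1}\|_{L^2_\delta\to H^1_{-\delta}}\le C$ for $\delta>1/2$. Since $q$ is compactly supported, $q(-\Delta-\lambda^2)^{-1}$ has norm $\le CE/\lambda\le 1/2$ on $L^2_\delta$ for $\lambda\ge\lambda_0(\Omega,n,E)$. A Neumann series then gives correctors with $\lambda\|r_j\|_{L^2(\Omega)}+\|r_j\|_{H^1(\Omega)}\le C$, and hence uniformly bounded boundary traces. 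With these two changes your argument becomes the paper's proof.
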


Several remarks are in order. First, the theorem shows that if the frequency $\lambda$ is sufficiently large relative to the bandwidth $M$, then all Fourier modes of the difference can be recovered with a Lipschitz stability constant that is independent of $M$. This is in contrast to typical finite-dimensional Lipschitz stability results, where the Lipschitz constant grows very rapidly as the dimension of the unknown grows. 

Second, we emphasize that only the \emph{difference} $q - q'$ is assumed to be bandlimited. The individual potentials need only be bounded. This relaxes the typical assumption that the unknowns belong to a finite-dimensional subspace.

Third, the estimate in Theorem~\ref{thm_main1} is stated in terms of the norm of $\Lambda_q(\lambda) - \Lambda_{q'}(\lambda)$ on $L^2(\partial \Omega)$ instead of the $H^{1/2}(\partial \Omega) \to H^{-1/2}(\partial \Omega)$ norm. In Lemma~\ref{lemma_dnmap_difference_ltwo} it is proved that the norm in $L^2(\partial \Omega)$ is indeed finite.

Fourth, Theorem~\ref{thm_main1} is stated when $\Omega$ is a cube, which is compatible with the definition of bandlimited potentials given in terms of complex exponentials. We have a similar result for any bounded domain $\Omega$ (see Theorem~\ref{thm_main1_omega}). However, that result is less clean since the constants depend on the norm of the inverse of a Landau--Pollak--Slepian operator which itself depends on $M$.

Theorem~\ref{thm_main1} raises a natural question: can one go beyond bandlimited differences and recover low frequencies of more general potentials? In certain linearized problems, it is known that the low-frequency part of a smooth potential can be Lipschitz stably recovered if the frequency is large enough \cite{kow_increasing_2025}. For nonlinear inverse problems, this would correspond to the existence of stable functionals as discussed in \cite{ALESSANDRINI2005207}.

Our second main result considers potentials whose difference is real-analytic, obtaining sub-Hölder stability for the low Fourier modes. For simplicity, we will only do this in the case $\lambda=0$ and for $\Omega = (-\pi,\pi)^n$. We denote the DN map of
\begin{align*}
    \left\{
    \begin{aligned}
        (-\Delta+q)u &= 0  &&\text{in } \Omega, \\ 
        u &= f  &&\text{on } \partial\Omega,
    \end{aligned}
    \right.
\end{align*}
by $\Lambda_q$. In both Theorems \ref{thm:analytic_case} and \ref{thm:stretched_exponential}, we will assume that $0$ is not a Dirichlet eigenvalue of $-\Delta+q$. Prior to stating the theorems, we additionally make clear that when a function $F$ is real-analytic on the torus, it admits a holomorphic extension to a complex neighborhood (Grauert tube) $\Omega_{\varepsilon}$ of some radius $\varepsilon > 0$, namely $\Omega_{\varepsilon} = \{ z \in \mathbb{C}^n : |\mathrm{Im}(z)| < \varepsilon \}/(2\pi\mathbb{Z})^n$. We denote the $L^\infty$ norm of this holomorphic extension by $\|F\|_{L^\infty(\Omega_{\varepsilon})}$.

\begin{theorem}[Real-analytic difference]\label{thm:analytic_case}
Let $\Omega = (-\pi,\pi)^n$, $n \geq 3$. Let $q, q' \in L^{\infty}(\Omega)$ satisfy $\|q\|_{L^{\infty}(\Omega)}, \|q'\|_{L^{\infty}(\Omega)} \leq E_1$, and assume that $Q = q - q'$ is real-analytic and periodic. Suppose furthermore that $Q$ extends holomorphically to a complex neighborhood $\Omega_\varepsilon$ of radius $\varepsilon > 0$ with $\|Q\|_{L^{\infty}(\Omega_\varepsilon)} \leq E_2$. Let $\delta = \|\Lambda_q - \Lambda_{q'}\|_{L^2(\partial\Omega) \to L^2(\partial\Omega)}$. Then there exist constants $C, \gamma > 0$ depending on $\Omega, n, E_1, E_2$, and a function $M(\delta) > 0$ with $M(\delta) \to \infty$ as $\delta \to 0$, such that for all sufficiently small $\delta$,
\[
\sup_{|k| \leq M(\delta)} |Q_k| \leq C \exp\left(-\sigma \left(\frac{1}{\gamma+\sigma}\log(1/\delta)\right)^{2/n}\right),
\]
where $\sigma = \varepsilon/(e \sqrt{n})$, and $M(\delta)$ satisfies
\[
M(\delta) \geq \left(\frac{1}{\gamma+\sigma}\log(1/\delta)\right)^{2/n}.
\]
\end{theorem}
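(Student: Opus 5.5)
We combine three ingredients: an Alessandrini-type integral identity tested with complex geometrical optics (CGO) solutions, which recovers the Fourier coefficients $Q_k$ up to errors; the exponential decay of $Q_k$ coming from analyticity, which controls the high-frequency tail; and an optimization of the cutoff $M$ in terms of $\delta$.

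\emph{Step 1 (Fourier decay).} Since $Q$ extends holomorphically to $\Omega_\varepsilon$, we shift the contour $x\mapsto x - i t\,k/|k|$ with $t<\varepsilon$ in the integral defining $Q_k$. This gives $|Q_k|\le E_2 e^{-\varepsilon|k|}$. We will only use the weaker bound $|Q_k|\le E_2 e^{-\sigma' |k|}$, which lets us sum tails comfortably. The constant $\sigma=\varepsilon/(e\sqrt n)$ suggests the authors use a coordinatewise/Cauchy-estimate version, with $|k|_\infty\ge |k|/\sqrt n$ and a factor $e$ lost in a Cauchy estimate. Either version suffices.

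\emph{Step 2 (identity with CGOs).} For each $k$ with $|k|\le M$, and using $n\ge 3$, we choose $\zeta_1,\zeta_2\in\mathbb C^n$ with $\zeta_j\cdot\zeta_j=0$, $\zeta_1+\zeta_2=-k$ (up to sign conventions), and $|\zeta_j|\sim \tau\ge |k|$. This is the Sylvester--Uhlmann construction; $n\ge 3$ gives the extra free direction. We take $u_j=e^{i x\cdot\zeta_j}(1+r_j)$ with $\|r_j\|_{L^2}\lesssim E_1/\tau$, and the Alessandrini identity
\[
\int_\Omega Q\,u_1u_2\,dx=\langle(\Lambda_q-\Lambda_{q'})u_1|_{\partial\Omega},u_2|_{\partial\Omega}\rangle .
\]
The left side equals $(2\pi)^nQ_k+O(E_1E_2/\tau)$. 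The right side is bounded by $\delta\,\|u_1\|_{L^2(\partial\Omega)}\|u_2\|_{L^2(\partial\Omega)}\le C\delta e^{C\tau}$, using the $L^2$-boundedness from Lemma~\ref{lemma_dnmap_difference_ltwo} and trace bounds on $r_j$. The result is
\[
|Q_k|\le C\bigl(\delta e^{\gamma\tau}+\tau^{-1}\bigr).
\]
The $1/\tau$ error would give only logarithmic stability. The key improvement is to avoid it by exploiting analyticity of $Q$.

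\emph{Step 3 (removing the $1/\tau$ term; the main obstacle).} The error $\int Q e^{-ik\cdot x}(r_1+r_2+r_1r_2)$ must be shown small using the smoothness of $Q$. The planned route is the following. The correction $r_j$ solves $(\Delta+2i\zeta_j\cdot\nabla - q)r_j=q$, and $r_1r_2$ is quadratic. Writing $q=q'+Q$, the leading error terms pair $Q$ against Faddeev-type inverses. The plan is to move the analytic function $Q$ onto the multiplier side: we pair each Fourier mode $Q_m$ with the symbol $(|\xi|^2+2\zeta\cdot\xi)^{-1}$, which is of size $\lesssim 1/(\tau|\xi|)$. Exponential decay $|Q_m|\le E_2e^{-\sigma|m|}$ then makes the error behave like $\sum_m e^{-\sigma|m|}\cdot(\text{stuff})$. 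If this works, the error becomes $\lesssim e^{-c\tau}$-type terms plus terms controlled by modes with $|m|>M$, giving $|Q_k|\le C(\delta e^{\gamma \tau}+e^{-\sigma M})$. If the direct estimate fails, I would try an iterative bootstrap. First bound all $|Q_m|$, $|m|\le M$, by $\eta$. Then feed this bound back into the error, which is bilinear in $Q$ with a gain, and split the modes into low ones, controlled by $\eta$, and high ones, controlled by $e^{-\sigma M}$. The number of low modes is $\sim M^n$, and a count of this kind is presumably what produces the exponent $2/n$. Indeed, balancing a factor $e^{\gamma M^{n/2}}$ from the lattice count against $e^{-\sigma M}$ is exactly consistent with the claimed $M(\delta)^{n/2}\sim \log(1/\delta)/(\gamma+\sigma)$.

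\emph{Step 4 (optimization).} Suppose the result of Step 3 is $\sup_{|k|\le M}|Q_k|\le C(\delta e^{\gamma M^{n/2}}+e^{-\sigma M^{n/2}\cdot M^{1-n/2}})$, or in whichever form the argument yields. We then choose $M$ with $M^{n/2}=\log(1/\delta)/(\gamma+\sigma)$, so that $\delta e^{\gamma M^{n/2}}=\delta^{\sigma/(\gamma+\sigma)}$. The surviving bound is $e^{-\sigma M}=\exp\bigl(-\sigma(\log(1/\delta)/(\gamma+\sigma))^{2/n}\bigr)$, which dominates for small $\delta$. This gives the statement, with $M(\delta)\to\infty$.
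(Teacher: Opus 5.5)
\textbf{There is a genuine gap in Step 3.}

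Your overall architecture is the right one: CGO solutions in the Alessandrini identity, exponential decay of $Q_k$, then an optimization in $M$. Steps 1, 2 and 4 are sound. The contour shift even gives the better rate $e^{-\varepsilon|k|}$. Your choice $M^{n/2}=\log(1/\delta)/(\gamma+\sigma)$ works as well as balancing $\gamma M^{n/2}+\sigma M=\log(1/\delta)$, since it gives $\delta e^{\gamma M^{n/2}}=e^{-\sigma M^{n/2}}\le e^{-\sigma M}$.

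Step 3, however, is not proved, and it is the entire content of the theorem beyond logarithmic stability. Step 4 simply assumes its output. Your primary route cannot work. The error
\[
\int_\Omega e^{-ik\cdot x}Q(r+r'+rr')\,dx=\sum_l Q_l\,(r+r'+rr')_{k-l}
\]
is \emph{linear} in $Q$, not bilinear. Its coefficients are built from the individual potentials $q,q'$, which are merely bounded, so they carry no analyticity and are genuinely of size $\tau^{-1}$ in $L^2$. The low modes $Q_l$, $|l|\le M$, that multiply them are exactly the unknowns and are not small a priori. So no $e^{-c\tau}$ bound on the low-frequency part of the error is available.

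\textbf{The missing idea is an absorption argument.} Your fallback gestures at it but never carries it out. Split the sum at $|l|=M$.

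For the low modes, apply Cauchy--Schwarz over the $\sim M^n$ lattice points together with Parseval:
\[
\sum_{|l|\le M}|Q_l r_{k-l}|\le CM^{n/2}\|r\|_{L^2(\Omega)}\sup_{|l|\le M}|Q_l|\le CM^{n/2}\tau^{-1}\sup_{|l|\le M}|Q_l|.
\]
For the quadratic term, $|(rr')_m|\le\|r\|_{L^2}\|r'\|_{L^2}$ gives $CM^n\tau^{-2}\sup_{|l|\le M}|Q_l|$. Bounding the linear terms by the naive count $M^n\|r\|_{L^1}$ would force $\tau\sim M^n$ and give exponent $1/n$ instead of $2/n$. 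The Parseval step is what produces $M^{n/2}$, and hence the $2/n$ you were trying to explain.

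The high modes are controlled by the decay and a lattice-sum bound $\sum_{|l|>M}e^{-a|l|}\le CM^ne^{-aM}$. They contribute $C(\tau^{-1}M^{n/2}+\tau^{-2}M^n)e^{-\sigma M}$.

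Now couple $\tau=\kappa M^{n/2}$, with $\kappa$ so large that $C(\kappa^{-1}+\kappa^{-2})\le\frac12$. Take the supremum over $|k|\le M$ on the left, and absorb $\frac12\sup_{|l|\le M}|Q_l|$. For the boundary factor, $|\operatorname{Re}\rho|=\sqrt{|k|^2/4+\tau^2}$ and $|k|\le M\le M^{n/2}$ bound it by $e^{\gamma M^{n/2}}$ with $\gamma=2\pi\sqrt{n\kappa^2+n/4}$. This yields
\[
\sup_{|k|\le M}|Q_k|\le Ce^{\gamma M^{n/2}}\delta+Ce^{-\sigma M},
\]
which is exactly the input your Step 4 needs. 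Without this step, your argument only reproduces the logarithmic bound of Step 2.
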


The stability estimate in Theorem~\ref{thm:analytic_case} is of \emph{sub-Hölder} type. It is worse than any Hölder modulus but better than logarithmic. Indeed, as $\delta \to 0$, the bound in the theorem behaves like $\exp(-c (\log(1/\delta))^{2/n})$. For $n \geq 3$, the exponent $2/n$ lies in $(0,1)$, so $(\log(1/\delta))^{2/n}$ grows slower than $\log(1/\delta)$. Consequently, for any $\alpha, \beta > 0$ and for $\delta$ small enough, 
\[
\delta^\alpha = \exp(-\alpha \log(1/\delta)) \lesssim \exp(-c (\log(1/\delta))^{2/n}) \lesssim (\log(1/\delta))^{-\beta}
\]
The first inequality says that the bound decays slower than any Hölder modulus $\delta^\alpha$. The second inequality follows from the fact that $(\log(1/\delta))^{2/n}$ grows faster than $\log\log(1/\delta)$, so $\exp(-c (\log(1/\delta))^{2/n})$ decays faster than any negative power of $\log(1/\delta)$. Thus the estimate interpolates between Hölder and logarithmic stability.

Theorem~\ref{thm:analytic_case} exhibits a clear dependence on the radius $\varepsilon$ of analyticity. A larger domain of holomorphic extension yields a stronger stability estimate. Moreover, the theorem shows that the smaller the difference of the DN maps, the more Fourier modes can be sub-Hölder stably recovered. In particular, given an a priori bound $\delta_0 > 0$, the Fourier modes up to $M(\delta_0)$ can be recovered with the sub-Hölder estimate  whenever $\delta \leq \delta_0$. This provides a quantitative relation between measurement precision and resolution in the nonlinear inverse problem.

Our third result pushes the regularity assumptions further by considering potentials whose Fourier coefficients decay at a \emph{super-exponential} rate. Specifically, we assume that the Fourier coefficients of $Q = q - q'$ satisfy
\begin{equation}\label{eq:stretched_decay_intro}
|Q_k| \leq C e^{-c |k|^{n/2}}, \qquad k \in \mathbb{Z}^n,
\end{equation}
for some $c > 0$. Since $n \geq 3$, this decay is faster than the exponential decay $e^{-\sigma|k|}$ characteristic of real-analytic functions. 
Any series of the form $\sum_{k} a_k e^{-c |k|^{n/2}} e^{i k \cdot x}$ with bounded coefficients $(a_k)$ satisfies the decay condition.

\begin{theorem}[Super-exponential decay]\label{thm:stretched_exponential}
Let $\Omega = (-\pi,\pi)^n$, $n \geq 3$. Let $q, q' \in L^{\infty}(\Omega)$ with $\|q\|_{L^{\infty}(\Omega)}, \|q'\|_{L^{\infty}(\Omega)} \leq E$, and assume that $Q = q - q'$ is periodic and satisfies \eqref{eq:stretched_decay_intro}. Let $\delta = \|\Lambda_q - \Lambda_{q'}\|_{L^2(\partial\Omega) \to L^2(\partial\Omega)}$. Then there exist constants $C > 0$ and $M(\delta) > 0$ with $M(\delta) \to \infty$ as $\delta \to 0$ such that for all sufficiently small $\delta$,
\[
\sup_{|k| \leq M(\delta)} |Q_k| \leq C \delta^\alpha,
\]
where
\[
\alpha = \frac{c}{\gamma + c}, \qquad M(\delta) = \left( \frac{1}{\gamma + c} \log(1/\delta) \right)^{2/n},
\]
and $\gamma$ is as in Theorem~\ref{thm:analytic_case}.
\end{theorem}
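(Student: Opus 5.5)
We would prove Theorem~\ref{thm:stretched_exponential} with the same two-step scheme as Theorem~\ref{thm:analytic_case}: a truncated Alessandrini-type estimate for the low Fourier modes, followed by an optimization of the truncation level. The key intermediate estimate comes from the proof of Theorem~\ref{thm:analytic_case}, and it is where the constant $\gamma$ originates. For $\lambda=0$ we split $Q$ into a low-frequency part $Q_{\le M}=\sum_{|k|\le M}Q_k e^{ik\cdot x}$ and a tail $Q_{>M}$. Then, using complex geometrical optics solutions or the bandlimited machinery behind Theorem~\ref{thm_main1}, we aim to show
\[
\sup_{|k|\le M}|Q_k|\le C\Big(e^{\gamma M^{n/2}}\,\delta+\sum_{|k|>M}|Q_k|\Big).
\]
The factor $e^{\gamma M^{n/2}}$ reflects the cost of testing against all modes up to $M$. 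In dimension $n$ the number of such modes is about $M^n$, and the exponential growth of the test solutions, or of the inverse of the finite-dimensional problem, scales like $\exp(\gamma M^{n/2})$. This is precisely the exponent that produces the power $2/n$ in both theorems. We take this estimate, with the same $\gamma$, as already established in the proof of Theorem~\ref{thm:analytic_case}, since the statement says $\gamma$ is as there.

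Next we control the tail using \eqref{eq:stretched_decay_intro}:
\[
\sum_{|k|>M}|Q_k|\le C\sum_{|k|>M}e^{-c|k|^{n/2}}\le C' e^{-c' M^{n/2}}.
\]
Comparing the tail sum with an integral, $\int_M^\infty r^{n-1}e^{-cr^{n/2}}\,dr$, gives a polynomial prefactor times $e^{-cM^{n/2}}$. To avoid weakening $c$, we absorb this prefactor either by choosing the constants carefully or by noting that the final rate only requires $\lesssim e^{-cM^{n/2}}$ up to polynomial loss in $M$, i.e.\ logarithmic loss in $\delta$. If needed, we handle this by working with $|k|_\infty$ norms or by adjusting $C$. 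With $t=M^{n/2}$, the combined estimate becomes
\[
\sup_{|k|\le M}|Q_k|\lesssim e^{\gamma t}\delta+e^{-ct}.
\]

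Finally we balance the two terms. Setting $e^{\gamma t}\delta=e^{-ct}$ gives $t=\log(1/\delta)/(\gamma+c)$, that is, $M(\delta)=\big(\tfrac{1}{\gamma+c}\log(1/\delta)\big)^{2/n}$, which tends to infinity as $\delta\to0$. Both terms then equal $\delta^{c/(\gamma+c)}=\delta^\alpha$. This yields the Hölder bound, and it holds for all $|k|\le M(\delta)$ once $\delta$ is small enough that $M(\delta)\ge1$.

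The main obstacle is the polynomial prefactor in the tail sum. It could degrade the clean exponent $\alpha=c/(\gamma+c)$ to $\alpha-\epsilon$ unless it is handled carefully. We would try first to bound the tail by $e^{-cM^{n/2}}$ times a prefactor that is absorbed either into $C$ or into the slack in the low-mode estimate's constant $\gamma$.
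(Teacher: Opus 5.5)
There is a genuine gap, and it is the obstacle you flag at the end without resolving. Your intermediate inequality
\[
\sup_{|k|\le M}|Q_k|\le C\Big(e^{\gamma M^{n/2}}\delta+\sum_{|k|>M}|Q_k|\Big)
\]
puts the tail in with weight one. With \eqref{eq:stretched_decay_intro}, the best available bound is $\sum_{|k|>M}|Q_k|\lesssim M^{n}e^{-cM^{n/2}}$ (Lemma~\ref{annulus_decomposition}), and the prefactor $M^n=t^2$ is unbounded.

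None of your proposed fixes removes it:
\begin{itemize}
\item It cannot be absorbed into $C$, since it grows with $M$.
\item Passing to $|k|_\infty$ does not remove the polynomial growth of the number of lattice points in the shells.
\item Using ``slack in $\gamma$'' would require proving the low-mode estimate with a constant strictly smaller than the $\gamma$ of the statement. You have not done this, and you explicitly take the same $\gamma$.
\end{itemize}
Balancing as you do therefore yields only $C\delta^{\alpha}(\log(1/\delta))^{2}$, or $\delta^{\alpha'}$ for every $\alpha'<\alpha$. It does not give the claimed $C\delta^{\alpha}$ with $\alpha=c/(\gamma+c)$. Separately, the real geometrical optics machinery of Theorem~\ref{thm_main1} is not available here, since $\lambda=0$. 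The estimate must come from CGO solutions.

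The missing idea is that the tail never enters with weight one. Since $Q$ is periodic and $\Omega$ is the full cube, the leading term of the Alessandrini identity is exactly $\int_\Omega e^{-ik\cdot x}Q\,dx=(2\pi)^nQ_k$. The high modes appear only through the remainder integral $\int_\Omega e^{-ik\cdot x}Q(r+r'+rr')\,dx$. There, Parseval, Cauchy--Schwarz and $\norm{r}_{L^2},\norm{r'}_{L^2}\lesssim\tau^{-1}$ give
\[
R_{\mathrm{tail}}(M)= C\tau^{-1}\Big(\sum_{|l|>M}|Q_l|^2\Big)^{1/2}+C\tau^{-2}\sum_{|l|>M}|Q_l| .
\]
Lemma~\ref{annulus_decomposition} bounds the first term's sum, after the square root, by $M^{n/2}e^{-cM^{n/2}}$, and the second by $M^{n}e^{-cM^{n/2}}$. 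You are forced anyway to choose $\tau=\kappa M^{n/2}$, to absorb the term $C(\tau^{-1}M^{n/2}+\tau^{-2}M^n)\sup_{|l|\le M}|Q_l|$. With that choice, the factors $\tau^{-1}$ and $\tau^{-2}$ cancel these prefactors exactly, so $R_{\mathrm{tail}}(M)\le C_3e^{-cM^{n/2}}$ with no loss. The proof of Theorem~\ref{thm:analytic_case} already carries these weights before the analytic decay is inserted. If you keep them instead of discarding them, your balancing step goes through verbatim.
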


Theorem~\ref{thm:stretched_exponential} shows that the stronger decay of Fourier coefficients upgrades the stability of low Fourier modes from sub-Hölder to Hölder, with the Hölder exponent $\alpha = c/(\gamma + c)$ depending explicitly on the decay rate $c$. As $c \to \infty$, we have $\alpha \to 1$, recovering near-Lipschitz stability. Moreover, the smaller the difference of the DN maps, the more Fourier modes can be recovered with Hölder stability.

\subsection{Relation to previous work}

Let us give some further references to conditional stability estimates related to our context. A logarithmic stability result for the inverse problem for potentials was proved by Alessandrini in \cite{alessandriniStableDeterminationConductivity1988}. There is a substantial literature on such estimates and we refer the reader to \cite{feldman_calderproblem_nodate} and references therein. The fact that higher smoothness of the potential leads to improved logarithmic stability estimates has been studied in detail by Novikov in \cite{Novikov2011}, and effectivized reconstructions at large frequencies are given in \cite{Novikov2009}. See \cite{IsaevNovikov2012, IsaevNovikov2014} and the survey \cite{Novikov2022} for further references in this direction. We also mention \cite{ammari_stability_2013}, which contains related stability results at high frequencies. For complementary instability results, see  \cite{mandache_exponential_2001, koch_instability_2021} and references therein. 

When the unknown conductivity belongs to a finite dimensional space, Lipschitz stability estimates were proved in \cite{ALESSANDRINI2005207} with precise estimates for the Lipschitz constant. More recently, inverse problems with finite measurements have been studied systematically in \cite{alberti2019calderon, alberti2022calderon, alberti2022infinite, alberti2023inverse}. In particular, \cite{alberti2019calderon} showed that for the Calderón problem, if the unknown is a finite linear combination of known basis functions, a finite number of boundary measurements is sufficient for reconstruction, assuming Lipschitz stability estimates for the problem. This was extended to independent measurements in \cite{alberti2022calderon}, and to a large class of inverse problems in \cite{alberti2022infinite}, where it is shown that if Lipschitz stability holds, the same estimate remains valid even with a finite number of measurements. In \cite{garde2026infinite} it was proved that certain (possibly infinite) basis expansions can be recovered Lipschitz stably in the inverse conductivity problem. See also \cite{bourgeois2013, carstea2026} for related Lipschitz and Hölder stability results.

A common feature of many of these works is that Lipschitz stability is achieved by assuming the unknowns belong to a known finite-dimensional subspace. Also, the constant in the stability estimate typically grows exponentially in the dimension of the space, or cannot be estimated. 

In contrast, our results require only the difference $q - q'$ to satisfy the respective regularity assumptions. The individual potentials $q$ and $q'$ are merely required to be bounded. Moreover, in Theorem~\ref{thm:analytic_case} and Theorem~\ref{thm:stretched_exponential}, neither the unknown potentials nor their difference is assumed to belong to any finite-dimensional subspace, yet we obtain quantitative control on finitely many Fourier modes. Finally, in Theorem~\ref{thm_main1} the Lipschitz constant is uniform over spaces of dimension $\lesssim \lambda^{2/n}$, in contrast to the exponential growth in the dimension typical of finite-dimensional results.

\subsection*{Organization of the paper}

Section~\ref{sec_bandlimited} proves Theorem~\ref{thm_main1} for bandlimited differences by using real geometrical optics solutions. Section~\ref{sec_real_analytic} establishes Theorem~\ref{thm:analytic_case} for real-analytic differences via complex geometrical optics solutions and estimates on the Fourier coefficients of periodic analytic functions. Section~\ref{sec_super_exponential} proves Theorem~\ref{thm:stretched_exponential} for super-exponential decay by combining the CGO framework with sharper tail estimates. Appendix~\ref{sec_difference_dn_maps} shows that the difference of the DN maps is a bounded operator on $L^2(\partial\Omega)$, ensuring the norms in our estimates are well-defined.

\subsection*{Acknowledgements}

All authors were partly supported by the Research Council of Finland (Centre of Excellence in Inverse Modelling and Imaging and FAME Flagship, grants 353091 and 359208).

\section{Bandlimited difference} \label{sec_bandlimited}

We use $q,q' \in L^{\infty}(\Omega)$ to denote the potentials of interest. In addition, we use $Q:=q-q'$ to denote the difference of these potentials. The assumption that $q,q'$ are $M$-bandlimited implies that 
\begin{align}\label{eq:bandlimited_def}
    Q=\sum_{|k|\leq M}Q_k e^{ik\cdot x} \text{ where } Q_k \in \mathbb{C}.
\end{align}
We consider the vector 
\[
\vec{Q} = (Q_k)_{|k| \leq M}.
\]
We will prove the following more general version of Theorem \ref{thm_main1}.

\begin{theorem} \label{thm_main1_omega}
    Let $\Omega\subset (-\pi,\pi)^n$, $n \geq 2$, be a bounded open set with smooth boundary, and let $E \geq 1$ be fixed. There is $C = C(\Omega, n, E) \geq 1$ such that for any $M \geq 1$, if $\norm{q}_{L^{\infty}(\Omega)} \leq E$, $\norm{q'}_{L^{\infty}(\Omega)} \leq E$ and $Q = q-q'$ is $M$-bandlimited, 
    then we have  
    \[ 
    \norm{T \vec{Q}}_{\ell^{\infty}}\leq C\|\Lambda_q(\lambda)-\Lambda_{q'}(\lambda)\|_{L^2(\partial \Omega)\rightarrow L^2(\partial \Omega)} + C(M^{n/2}/\lambda) \norm{\vec{Q}}_{\ell^{\infty}},
    \]
    whenever $\lambda \geq \max(M^{n/2}, C)$, where $T$ is a positive definite matrix with entries
    \[
    T_{k,l} = \int_{\Omega} e^{-i(k-l) \cdot x} \,dx, \qquad |k|, |l| \leq M.
    \]

    In particular, if 
    \[
    \lambda \geq 2 C M^{n/2} \norm{T^{-1}}_{\ell^{\infty} \to \ell^{\infty}},
    \]
    then one has the Lipschitz stability estimate 
    \[
    \norm{\vec{Q}}_{\ell^{\infty}} \leq 2 C \norm{T^{-1}}_{\ell^{\infty} \to \ell^{\infty}} \|\Lambda_q(\lambda)-\Lambda_{q'}(\lambda)\|_{L^2(\partial \Omega)\rightarrow L^2(\partial \Omega)}.
    \]
\end{theorem}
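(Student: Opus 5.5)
We will use the Alessandrini-type integral identity with real geometrical optics (plane wave) solutions. For $u$ solving $(-\Delta-\lambda^2+q)u=0$ and $u'$ solving $(-\Delta-\lambda^2+q')u'=0$ in $\Omega$, Green's formula gives
\[
\int_\Omega (q-q') u\, u' \,dx = \int_{\partial\Omega} (\Lambda_q(\lambda)-\Lambda_{q'}(\lambda)) (u|_{\partial\Omega})\, u'|_{\partial\Omega}\,dS,
\]
so the right side is bounded by $\delta\,\|u\|_{L^2(\partial\Omega)}\|u'\|_{L^2(\partial\Omega)}$, where $\delta$ is the $L^2(\partial\Omega)\to L^2(\partial\Omega)$ norm, finite by Lemma~\ref{lemma_dnmap_difference_ltwo}. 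For a target frequency $m$ with $|m|\le 2M$, we pick unit vectors $\omega,\omega'$ with $\lambda\omega+\lambda\omega' = -m$. This is possible since $|m|\le 2M\le 2\lambda$, and $n\ge 2$ lets us choose $\omega\ne-\omega'$ symmetric about the direction of $m$. We then look for
\[
u=e^{i\lambda\omega\cdot x}(1+r), \qquad u'=e^{i\lambda\omega'\cdot x}(1+r'),
\]
where $r$ solves $(-\Delta -2i\lambda\omega\cdot\nabla + q) r = -q$, and similarly for $r'$.

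\textbf{Step 1 (remainder estimates).} We want $\|r\|_{L^2(\Omega)}\le C E/\lambda$ and control of the boundary traces. Since $\Omega\subset(-\pi,\pi)^n$, we extend to the torus (or a slightly larger cube) and use the standard Fourier-multiplier inverse of $-\Delta - 2i\lambda\omega\cdot\nabla$. Its symbol $|\xi|^2+2\lambda\omega\cdot\xi$ can vanish, so we will instead use a Faddeev/Sylvester--Uhlmann type estimate with shifted lattice, or Hähner's torus construction. This gives $\|r\|_{L^2}\lesssim \|q\|_{L^2}/\lambda$ and $\|r\|_{H^1}\lesssim E$, with a Neumann series once $\lambda\ge C$. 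Traces on $\partial\Omega$ are then bounded uniformly, so $\|u\|_{L^2(\partial\Omega)}\le C$.

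\textbf{Step 2 (extracting Fourier data).} Plugging in gives
\[
\int_\Omega Q e^{-im\cdot x}\,dx + \int_\Omega Q e^{-im\cdot x}(r+r'+rr')\,dx = O(\delta).
\]
Since $Q=\sum_{|l|\le M}Q_l e^{il\cdot x}$, the first term is $\sum_l T_{m,l}Q_l$ (after relabeling $m=k$, $|k|\le M$), which is exactly $(T\vec Q)_k$. For the error we use $|Q|\le \|Q\|_{L^2}\cdots$. More precisely, we bound
\[
\|Q\|_{L^2(\Omega)} \le \|Q\|_{L^2((-\pi,\pi)^n)} = (2\pi)^{n/2}\|\vec Q\|_{\ell^2}\le C M^{n/2}\|\vec Q\|_{\ell^\infty}.
\]
Combined with $\|r\|_{L^2}\lesssim 1/\lambda$ (and $\|rr'\|_{L^1}\le\|r\|\|r'\|$), this yields an error of $C(M^{n/2}/\lambda)\|\vec Q\|_{\ell^\infty}$. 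Note that we must not bound $\|Q\|_{L^\infty}$ by $2E$ here, since that would give no smallness in $\vec Q$. This is exactly where the $M^{n/2}$ comes from.

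\textbf{Step 3.} Positive definiteness of $T$ follows from $\bar a^{T}Ta=\int_\Omega|\sum a_l e^{-il\cdot x}|^2>0$. For the final claim, we apply $T^{-1}$ to the first inequality and absorb the $\|\vec Q\|_{\ell^\infty}$ term using $C\|T^{-1}\|M^{n/2}/\lambda\le 1/2$. For the cube, $T=(2\pi)^n I$, which yields Theorem~\ref{thm_main1}.

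The main obstacle is Step 1: getting the $1/\lambda$ remainder decay with constants uniform in $\lambda$ and $\omega$ for real (non-complex) phases, where the operator $-\Delta-2i\lambda\omega\cdot\nabla$ is not uniformly invertible. We would first try the torus Fourier approach, working on a doubled cube and choosing a quasi-periodic shift so the symbol stays $\gtrsim\lambda$ on the lattice. If that fails, we would use Agmon-type weighted $L^2_\delta\to H^1_{-\delta}$ resolvent estimates for $(-\Delta-\lambda^2-i0)$ on $\mathbb R^n$, which give the $O(1/\lambda)$ bound.
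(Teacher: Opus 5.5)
Your proposal is essentially the paper's proof: the Alessandrini identity with real geometrical optics solutions, extraction of $(T\vec Q)_k$, a remainder of size $C(M^{n/2}/\lambda)\norm{\vec Q}_{\ell^\infty}$, and then positive definiteness of $T$ plus absorption; the construction of $r$ that actually works is your fallback (outgoing resolvent with the Agmon bound $\lambda\norm{(-\Delta-\lambda^2-i0)^{-1}}_{L^2_\delta\to L^2_{-\delta}}\le C$ and a Neumann series), which is exactly Proposition~\ref{prop_go_solutions}. Drop the torus/shifted-lattice option, which cannot work for real phases because the symbol $|\xi+\lambda\omega|^2-\lambda^2$ is real and vanishes on a sphere of radius $\lambda$ that every translate of $\mathbb{Z}^n$ approaches to within $o(1)$ as $\lambda\to\infty$, so no shift keeps it $\gtrsim\lambda$ (H\"ahner's trick needs the imaginary part of a complex phase); also make explicit that for the $rr'$ term you use $\norm{Q}_{L^\infty}\le\sum_{|l|\le M}|Q_l|\le CM^n\norm{\vec Q}_{\ell^\infty}$, giving $CM^n\lambda^{-2}\norm{\vec Q}_{\ell^\infty}\le CM^{n/2}\lambda^{-1}\norm{\vec Q}_{\ell^\infty}$ precisely because $\lambda\ge M^{n/2}$.
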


The operator $T$ above is essentially a Landau-Pollak-Slepian operator appearing in time-frequency analysis, i.e. 
\[
T = P_M \chi_{\Omega} P_M
\]
where $P_M$ is the orthogonal projection to Fourier series over $|k| \leq M$. If $\Omega = (-\pi,\pi)^n$, then $T$ is the identity operator on $M$-bandlimited functions, which gives Theorem \ref{thm_main1}. However, if $\Omega$ is a strict subset of $(-\pi,\pi)^n$, then $\norm{T^{-1}}$ will grow as $M \to \infty$.

We will make use of the following (real) geometrical optics solutions. We only state this result for sufficiently large $\lambda$, since this is enough for our purposes. The constants will then depend on a bound for $\norm{q}_{L^{\infty}(\Omega)}$.

\begin{proposition} \label{prop_go_solutions}
Let $\Omega\subset \R^n$, $n \geq 2$, be a bounded open set with Lipschitz boundary, and let $q \in L^{\infty}(\Omega)$ satisfy $\norm{q}_{L^{\infty}(\Omega)} \leq E$. There are $C, \lambda_0 > 0$ depending only on $\Omega$, $n$, and $E$ such that for any $\lambda \geq \lambda_0$ and for any $\eta \in \R^n$ with $|\eta| = \lambda$, there is a solution $u \in H^2(\Omega)$ of $(-\Delta-\lambda^2+q)u = 0$ in $\Omega$ having the form 
\[
u = e^{i\eta \cdot x}(1+r),
\]
where 
\[
\lambda \norm{r}_{L^2(\Omega)} + \norm{r}_{H^1(\Omega)} + \norm{r}_{L^2(\partial \Omega)} \leq C.
\]
\end{proposition}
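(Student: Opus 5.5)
Substituting $u = e^{i\eta\cdot x}(1+r)$ into $(-\Delta-\lambda^2+q)u=0$ and using $|\eta|^2=\lambda^2$, the plan is to obtain the conjugated equation
\[
(-\Delta - 2i\eta\cdot\nabla + q) r = -q \quad \text{in } \Omega .
\]
We look for $r$ solving this equation and satisfying the stated bounds. The key tool is a uniform solvability estimate for $P_\eta = -\Delta - 2i\eta\cdot\nabla$ on a bounded set. Since $\Omega$ is bounded, we may place $\Omega \subset B$ with $B$ a ball or cube of fixed size, and extend $q$ by zero. Then we construct a right inverse $G_\eta$ of $P_\eta$ satisfying
\[
\norm{G_\eta f}_{L^2(B)} \leq \frac{C_0}{\lambda}\norm{f}_{L^2(B)}, \qquad \norm{G_\eta f}_{H^1(B)} \leq C_0\norm{f}_{L^2(B)}, \qquad \norm{G_\eta f}_{H^2(B)} \leq C_0\lambda\norm{f}_{L^2(B)} .
\]
For $G_\eta$, the first choice is the Fourier multiplier on a periodic cube of twice the size, with symbol $(|\xi|^2 + 2\eta\cdot\xi)^{-1}$. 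This symbol vanishes on the sphere $|\xi+\eta|=\lambda$, so we shift the lattice by a fixed vector (as in Hähner's construction), for instance $\xi \in \mathbb{Z}^n + \tfrac12 \eta/|\eta|$-type offsets. Then $\bigl||\xi+\eta|^2-\lambda^2\bigr| \gtrsim \lambda$ on the shifted lattice, which yields the $1/\lambda$ bound. The bound $|\xi|^2/\bigl||\xi+\eta|^2-\lambda^2\bigr| \lesssim 1+|\xi|/\lambda$ gives the $H^1$ and $H^2$ bounds. Alternatively, one can use the Agmon-type/Carleman estimate $\norm{e^{i\eta\cdot x}}$-conjugated Laplacian bound $\lambda\norm{v}\lesssim \norm{(-\Delta-\lambda^2)v}$ on compact support. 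The main obstacle is this step: real $\eta$ gives no exponential weight, so the $1/\lambda$ gain must come from the separation of the shifted lattice from the characteristic sphere.

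Next, we solve $r = G_\eta(-q - q r)$ by a Neumann series. The map $r\mapsto -G_\eta(qr)$ has $L^2$ operator norm at most $C_0E/\lambda \le 1/2$ once $\lambda\ge\lambda_0:=2C_0E$. Hence there is a unique $r \in L^2(B)$ with $\norm{r}_{L^2}\le 2C_0 E|B|^{1/2}/\lambda$. Feeding $f=-q(1+r)$, whose $L^2$ norm is bounded by a constant depending on $E$, back into the $H^1$ and $H^2$ bounds gives $\norm{r}_{H^1(\Omega)}\le C$ and $\norm{r}_{H^2(\Omega)}\le C\lambda$. In particular $r\in H^2(\Omega)$, and hence $u\in H^2(\Omega)$ solves the equation.

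Finally, for the boundary term we use the trace interpolation inequality valid on Lipschitz domains,
\[
\norm{r}_{L^2(\partial\Omega)}^2 \leq C\norm{r}_{L^2(\Omega)}\norm{r}_{H^1(\Omega)} .
\]
This gives $\norm{r}_{L^2(\partial\Omega)}^2 \lesssim \lambda^{-1}$, so it is bounded (indeed small). Even the crude trace bound $\norm{r}_{L^2(\partial\Omega)}\lesssim\norm{r}_{H^1}$ suffices. Combining the three bounds gives $\lambda\norm{r}_{L^2}+\norm{r}_{H^1}+\norm{r}_{L^2(\partial\Omega)}\le C$, with constants depending only on $\Omega,n,E$.
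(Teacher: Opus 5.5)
There is a genuine gap in the construction of $G_\eta$, which is the only nontrivial step. The symbol $p(\xi)=|\xi|^2+2\eta\cdot\xi=|\xi+\eta|^2-\lambda^2$ is real and vanishes on the sphere $S=\{|\xi+\eta|=\lambda\}$. At a point at distance $d\le 1$ from $S$ one has $|p|\le(2\lambda+1)d$. So $|p|\gtrsim\lambda$ on your translated lattice would force every lattice point to stay at distance $\gtrsim 1$ from a sphere of radius $\lambda$, and no translate achieves this for large $\lambda$, even one chosen depending on $\eta$ and $\lambda$. For instance, take $n=2$, $\eta=\lambda e_1$ and the lattice $h\mathbb{Z}^2+a$. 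On the row $\xi_2=y$ the sphere is crossed at $\xi_1=-\lambda+\sqrt{\lambda^2-y^2}$. As $y$ runs through the rows with $\sqrt{h\lambda}\le y\le 3\sqrt{h\lambda}$, this crossing point moves monotonically in steps of size $O(\lambda^{-1/2})$ over a total distance of about $4h$. Hence some row contains a lattice point within $O(\lambda^{-1/2})$ of the crossing, and there $|p|=O(\lambda^{1/2})$. So $\norm{G_\eta}_{L^2\to L^2}\gtrsim\lambda^{-1/2}$ at best, and $G_\eta$ is undefined if a lattice point lands on $S$. This does not give $\lambda\norm{r}_{L^2}\le C$, and slicing gives the same conclusion in every dimension $n\ge 2$. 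Hähner's shift works for complex $\zeta$ because there $\mathrm{Im}\,p$ is a nonzero linear form in $\xi$, which a half-shift keeps uniformly away from zero. For real $\eta$ there is nothing comparable, so the ``separation of the shifted lattice from the characteristic sphere'' you rely on is false.

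The estimate you mention only in passing is what is needed, and it is what the paper uses: the limiting absorption bound for the free outgoing resolvent, $\lambda\norm{(-\Delta-\lambda^2)^{-1}}_{L^2_\delta\to L^2_{-\delta}}+\norm{(-\Delta-\lambda^2)^{-1}}_{L^2_\delta\to H^1_{-\delta}}\le C_{n,\delta}$ for $\delta>1/2$, $\lambda\ge 1$. Your compact-support bound $\lambda\norm{v}\lesssim\norm{(-\Delta-\lambda^2)v}$ is essentially the same fact. The paper works with the unconjugated remainder. It solves $(-\Delta-\lambda^2+q)\tilde r=-qe^{i\eta\cdot x}$ on $\R^n$ (right-hand side extended by zero) by a Neumann series in $L^2_\delta$, using that $q(-\Delta-\lambda^2)^{-1}$ has norm $\lesssim E/\lambda$ on $L^2_\delta$ because $q$ is compactly supported. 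It then sets $r=e^{-i\eta\cdot x}\tilde r|_{\Omega}$. The $H^1$ bound survives conjugation because $\norm{\nabla r}\le\norm{\nabla\tilde r}+\lambda\norm{\tilde r}$, and the trace theorem handles $\norm{r}_{L^2(\p\Omega)}$. If you prefer your bounded-domain formulation, you must first turn the a priori estimate into a bounded linear solution operator on $L^2(B)$ of norm $C/\lambda$ by duality. The $H^1(\Omega)$ bound for $\overline{\Omega}\subset B$ must then come from a Caccioppoli inequality $\norm{\nabla w}_{L^2(\Omega)}\lesssim\lambda\norm{w}_{L^2(B)}+\norm{f}_{L^2(B)}$ plus interior regularity, not from symbol bounds. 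With that in place, your Neumann series and trace steps go through as written.
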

\begin{proof}
We first choose $\tilde{r}$ to be a suitable solution of  
\[
(-\Delta-\lambda^2+q) \tilde{r} = -q e^{i \eta \cdot x} \text{ in $\Omega$}.
\]
Denote by $f$ the extension of the right hand side by zero to $\R^n$. Using notation as in \cite[Chapter 7]{feldman_calderproblem_nodate}, given $\delta > 1/2$ the outgoing resolvent $(-\Delta-\lambda^2)^{-1}: L^2_{\delta} \to H^2_{-\delta}$ satisfies 
\[
\lambda \norm{(-\Delta-\lambda^2)^{-1}}_{L^2_{\delta} \to L^2_{-\delta}} + \norm{(-\Delta-\lambda^2)^{-1}}_{L^2_{\delta} \to H^1_{-\delta}}\leq C_{n,\delta}, \qquad \lambda \geq 1.
\]
Thus $q(-\Delta-\lambda^2)^{-1}$ is bounded on $L^2_{\delta}$ with norm $\leq \tilde{C}_{n,\delta, \Omega} E/\lambda$. If we choose $\lambda_0 = \lambda_0(\Omega, n, E, \delta)$ large enough, then  the norm is $\leq 1/2$ when $\lambda \geq \lambda_0$. It follows that for $\lambda \geq \lambda_0$ the equation 
\[
(-\Delta-\lambda^2+q) \tilde{r} = f
\]
has a solution $\tilde{r} \in H^2_{-\delta}(\R^n)$ that satisfies 
\[
\lambda \norm{\tilde{r}}_{L^2_{-\delta}} + \norm{\nabla \tilde{r}}_{L^2_{-\delta}} \leq 2 C_{n,\delta} \norm{f}_{L^2_{\delta}}.
\]
If we fix e.g.\ $\delta=1$, the right hand side is $\leq C(\Omega,n,E)$.
Since $\Omega$ has Lipschitz boundary, the trace theorem implies that 
\[
\norm{\tilde{r}}_{H^{1/2}(\partial \Omega)} \leq C_{\Omega,n} \norm{\tilde{r}}_{H^1(\Omega)} \leq C(\Omega,n,E).
\]
Now, if we take $r = e^{-i \eta \cdot x} \tilde{r}|_{\Omega}$, then $r$ will satisfy the required properties.
\end{proof}

\begin{proof}[Proof of Theorem \ref{thm_main1_omega}]
We begin with the Alessandrini identity 
\begin{equation} \label{bandlimited_alessandrini}
( (\Lambda_q(\lambda)-\Lambda_{q'}(\lambda)) (u|_{\p \Omega}), u'|_{\p \Omega})_{\p \Omega} = \int_{\Omega} Q u \bar{u}' \,dx
\end{equation}
which is valid for $u, u' \in H^1(\Omega)$ solving $(-\Delta-\lambda^2+q)u = (-\Delta-\lambda^2+q')u' = 0$ \cite{feldman_calderproblem_nodate}. We will choose $u$ and $u'$ to be real geometrical optics solutions. Given $k \in \mathbb{Z}^n$ with $|k| \leq 2 \lambda$, we choose a unit vector $\omega \in \R^n$ with $k \cdot \omega = 0$ as well as real vectors 
\begin{align*}
    \eta = -\frac{k}{2} + \sqrt{\lambda^2-\frac{|k|^2}{4}}\omega, \qquad \eta' = \frac{k}{2} + \sqrt{\lambda^2-\frac{|k|^2}{4}}\omega.
\end{align*}
Then $|\eta| = |\eta'| = \lambda$.

By Proposition \ref{prop_go_solutions}, if $\lambda \geq \lambda_0(\Omega,n,E)$ there exist $H^2(\Omega)$ solutions of $(-\Delta-\lambda^2+q)u = (-\Delta - \lambda^2 + q')u' = 0$ in $\Omega$, having the form 
\[
u = e^{i \eta \cdot x}(1+r), \qquad u' = e^{i \eta' \cdot x}(1+r')
\]
with $\lambda \norm{r}_{L^2(\Omega)} + \norm{r}_{L^2(\p \Omega)} \leq C(\Omega,n,E)$ and similarly for $r'$.

Inserting $u$ and $u'$ in \eqref{bandlimited_alessandrini}, we have 
\begin{multline*}
\left| \int_{\Omega} e^{-ik \cdot x} Q \,dx \right| \leq \|\Lambda_q(\lambda)-\Lambda_{q'}(\lambda)\|_{L^2(\partial \Omega)\rightarrow L^2(\partial \Omega)} \|u\|_{L^2(\partial \Omega)}\|u'\|_{L^2(\partial \Omega)} \\
 + \left| \int_{\Omega} e^{-ik \cdot x} Q (r + r' + r r') \,dx \right|.
\end{multline*}
We will use the notation 
\[
r_k = \int_{\Omega} e^{-ik \cdot x} r(x) \,dx.
\]
We can identify $r$ with its zero extension to $(-\pi, \pi)^n$, and then $r_k$ are just the Fourier coefficients of $r$ (up to a factor involving $2\pi$). We use similar notation for $r'$ and $r r'$. Using that $Q$ is bandlimited, we insert the expression $Q = \sum_{|l| \leq M} Q_l e^{il \cdot x}$ to obtain 
\[
\left| \int_{\Omega} e^{-ik \cdot x} Q \,dx \right|  \leq C \|\Lambda_q(\lambda)-\Lambda_{q'}(\lambda)\| + \sum_{|l| \leq M} |Q_l (r+r'+r r')_{k-l}|,
\]
where $C$ denotes a constant depending on $\Omega$, $n$ and $E$ that may change from line to line. On the right, we use Cauchy-Schwarz to obtain 
\[
\sum_{|l| \leq M} |Q_l r_{k-l}| \leq (\sup_{|l| \leq M} |Q_l|) \sum_{|l| \leq M} |r_{k-l}| \leq C M^{n/2} (\sup_{|l| \leq M} |Q_l|) (\sum_l |r_{k-l}|^2)^{1/2}.
\]
By the Parseval identity, the last sum is $C_n \norm{r}_{L^2(\Omega)} \leq C \lambda^{-1}$.
A similar argument applies for the term with $r'$. For the term with $r r'$, we use the simple estimate $|(r r')_m| \leq \norm{r r'}_{L^1}$ and Cauchy-Schwarz to obtain  
\begin{align*}
| \sum_{|l| \leq M} Q_l (r r')_{k-l} | &\leq (\sup_{|l| \leq M} |Q_l|) \sum_{|l| \leq M} |(r r')_{k-l}| \leq (\sup_{|l| \leq M} |Q_l|) \sum_{|l| \leq M} \norm{r r'}_{L^1} \\
 &\leq C M^n (\sup_{|l| \leq M} |Q_l|) \norm{r}_{L^2} \norm{r'}_{L^2}.
\end{align*}
The $L^2$ estimates for $r$ and $r'$ imply that 
\[
| \sum_{|l| \leq M} Q_l (r r')_{k-l} | \leq C M^n (\sup_{|l| \leq M} |Q_l|) \lambda^{-2}.
\]
Combining these facts, we get 
\[
\left| \int_{\Omega} e^{-ik \cdot x} Q \,dx \right|  \leq C \|\Lambda_q(\lambda)-\Lambda_{q'}(\lambda)\| + C (\sup_{|l| \leq M} |Q_l|) (M^{n/2} \lambda^{-1} + M^n \lambda^{-2}).
\]

To study the left hand side, we note using \eqref{eq:bandlimited_def} that 
\[
\int_{\Omega} e^{-ik \cdot x} Q \,dx = \sum_{|l| \leq M} T_{k,l} Q_l
\]
where 
\[
T_{k,l} = \int_{\Omega} e^{-i(k-l) \cdot x} \,dx.
\]
Now 
\[
\sum_{|k|, |l| \leq M} T_{k,l} c_k \bar{c}_l = \int_{\Omega} \left| \sum_{|k| \leq M} c_k e^{-ik \cdot x} \right|^2 \,dx \geq 0.
\]
Moreover, one has equality if and only if $\sum_{|k| \leq M} c_k e^{-ik \cdot x} = 0$ in $\Omega$, which is equivalent with $c_k=0$ for all $|k| \leq M$ by real-analyticity. Thus the matrix $(T_{k,l})_{|k|, |l| \leq M}$ is positive definite, hence invertible.

We have thus proved that 
\[
\norm{T \vec{Q}}_{\ell^{\infty}} \leq C \|\Lambda_q(\lambda)-\Lambda_{q'}(\lambda)\| + C \norm{\vec{Q}}_{\ell^{\infty}} (M^{n/2} \lambda^{-1} + M^n \lambda^{-2}).
\]
If we assume that $\lambda \geq M^{n/2}$, this becomes 
\[
\norm{T \vec{Q}}_{\ell^{\infty}} \leq C \|\Lambda_q(\lambda)-\Lambda_{q'}(\lambda)\| + C \norm{\vec{Q}}_{\ell^{\infty}} M^{n/2} \lambda^{-1}
\]
which is the first claim in the statement. The remaining claims follow since $T$ is invertible.
\end{proof}

\section{Real-analytic difference} \label{sec_real_analytic}

In this section, we prove Theorem~\ref{thm:analytic_case}. Our approach combines 
complex geometric optics (CGO) solutions with the decay properties of Fourier modes 
of $2\pi$-periodic analytic functions. In particular, we seek to bound
\begin{equation}
\int_\Omega e^{-ik\cdot x} Q(x) \, dx, \quad \Omega=(-\pi,\pi)^n
\end{equation}
where $Q := q - q' \in C^\omega(\overline{\Omega})$ and $k \in \mathbb{Z}^n$.

The following result is classical, but we include the proof for completeness and highlight the role that periodicity plays in our argument.

\begin{lemma}\label{Fourier_restriction_decay}
Let $\Omega = (-\pi, \pi)^n$ and let $Q \in C^\omega(\overline{\Omega})$ be periodic 
(i.e., $Q$ and all its derivatives match on opposite faces of $\partial\Omega$). 
Assume $Q$ extends holomorphically to a complex neighborhood $\Omega_\varepsilon$ 
of radius $\varepsilon > 0$. Then for all $k \in \mathbb{Z}^n$,
\begin{equation}\label{eq:Fourier_mode_analytic_decay}
\left| \int_\Omega e^{-ik\cdot x} Q(x) \, dx \right| 
\leq e^2 \operatorname{Vol}(\Omega) \|Q\|_{L^\infty(\Omega_\varepsilon)} \, 
e^{-\frac{\varepsilon |k|}{e\sqrt{n}}}.
\end{equation}
\end{lemma}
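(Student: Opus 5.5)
The plan is to prove \eqref{eq:Fourier_mode_analytic_decay} by repeated integration by parts in a single well-chosen coordinate direction. Periodicity kills all boundary terms, and Cauchy's estimates for the holomorphic extension control the resulting derivatives. At the end we optimize the number of integrations by parts in terms of $|k|$; this is where the factor $e\sqrt{n}$ will appear. The case $k=0$ is trivial, since then the left-hand side is at most $\operatorname{Vol}(\Omega)\|Q\|_{L^\infty(\Omega_\varepsilon)}$. So assume $k\neq 0$.

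\emph{Step 1 (choice of direction).} Since $|k|^2=\sum_j k_j^2$, there is an index $j$ with $|k_j|\geq |k|/\sqrt{n}$; in particular $k_j\neq 0$. Write $e^{-ik\cdot x} = (ik_j)^{-1}\,\partial_{x_j}\bigl(-e^{-ik\cdot x}\bigr)$ and integrate by parts in $x_j$. The boundary terms live on the opposite faces $x_j=\pm\pi$. Because $k\in\mathbb{Z}^n$, the factor $e^{-ik\cdot x}$ is $2\pi$-periodic in $x_j$, and by assumption $Q$ and all its derivatives agree on these faces. Hence the boundary terms cancel. Repeating this $N$ times gives
\[
\int_\Omega e^{-ik\cdot x}Q\,dx = (ik_j)^{-N}\int_\Omega e^{-ik\cdot x}\,\partial_{x_j}^N Q\,dx,
\]
and therefore $\bigl|\int_\Omega e^{-ik\cdot x}Q\,dx\bigr| \leq |k_j|^{-N}\operatorname{Vol}(\Omega)\sup_{\Omega}|\partial_{x_j}^NQ|$.

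\emph{Step 2 (Cauchy estimates).} Fix $x\in\Omega$ and $0<\rho<\varepsilon$. The one-variable function $z\mapsto Q(x+z e_j)$ is holomorphic on the disc $|z|\leq\rho$, because $|\mathrm{Im}(x+ze_j)|\leq\rho<\varepsilon$ places this disc inside $\Omega_\varepsilon$. Cauchy's inequality then gives
\[
|\partial_{x_j}^N Q(x)|\leq N!\,\rho^{-N}\|Q\|_{L^\infty(\Omega_\varepsilon)}\leq N^N\rho^{-N}\|Q\|_{L^\infty(\Omega_\varepsilon)}.
\]
Combining with Step 1, the integral is bounded by $\operatorname{Vol}(\Omega)\|Q\|_{L^\infty(\Omega_\varepsilon)}\,\bigl(N/(\rho|k_j|)\bigr)^N$.

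\emph{Step 3 (optimization in $N$).} This is the only delicate point: one must check that the integer rounding and the case $N=0$ cost at most a constant factor. Choose $N=\lfloor \rho|k_j|/e\rfloor$. Then $N/(\rho|k_j|)\leq 1/e$, so the bound becomes $e^{-N}\leq e\cdot e^{-\rho|k_j|/e}$.
\begin{itemize}
\item If $N=0$, then $\rho|k_j|/e<1$, so $e\cdot e^{-\rho|k_j|/e}\geq 1$. In this case the trivial estimate $\operatorname{Vol}(\Omega)\|Q\|_{L^\infty(\Omega_\varepsilon)}$ already implies the claim.
\item If $N\geq 1$, we use $|k_j|\geq |k|/\sqrt{n}$ and let $\rho\to\varepsilon$. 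This yields $e\operatorname{Vol}(\Omega)\|Q\|_{L^\infty(\Omega_\varepsilon)}e^{-\varepsilon|k|/(e\sqrt{n})}$.
\end{itemize}
This is even better than the stated $e^2$ constant, which leaves room for any slack in the rounding.
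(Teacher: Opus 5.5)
Your proof is correct. It shares the paper's skeleton: repeated integration by parts with periodicity cancelling boundary terms, then Cauchy estimates, then $N$ chosen proportional to $|k|$. The difference is that you integrate by parts with a different operator, which changes the Cauchy estimate you need.

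The paper uses $\int_\Omega e^{-ik\cdot x}Q\,dx=|k|^{-2N}\int_\Omega e^{-ik\cdot x}(-\Delta)^NQ\,dx$ and bounds $(-\Delta)^NQ$ by $n^N\max_{|\alpha|=2N}\|D^\alpha Q\|_{L^\infty(\Omega)}$. It then invokes the multivariable estimate $\|D^\alpha Q\|_{L^\infty(\Omega)}\le\varepsilon^{-|\alpha|}|\alpha|^{|\alpha|}\|Q\|_{L^\infty(\Omega_\varepsilon)}$ with $2N\approx\varepsilon|k|/(e\sqrt n)$. There the $\sqrt n$ comes from the $n^N$ terms in $(-\Delta)^N$.

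You instead differentiate only in a coordinate with $|k_j|\ge|k|/\sqrt n$, so the $\sqrt n$ enters through that choice. The boundary cancellation involves only the faces $x_j=\pm\pi$, via a one-dimensional integration by parts plus Fubini. The derivative bound is just the one-variable Cauchy inequality on a disc in the $e_j$ direction, and that disc clearly lies in $\Omega_\varepsilon$.

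Your route is more elementary and self-contained. With $|\operatorname{Im} z|$ the Euclidean norm, the estimate quoted in the paper is not the plain equal-radius polydisc bound, since such a polydisc must have radius below $\varepsilon/\sqrt n$. It needs a small extra argument, e.g.\ radii $r_j$ adapted to $\alpha$ with $\sum_j r_j^2<\varepsilon^2$. The paper's version treats all directions symmetrically but gains nothing in the exponent: both give $e^{-\varepsilon|k|/(e\sqrt n)}$. You get the constant $e$ instead of $e^2$ because you round a single integer $N$ rather than $2N$.

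A cosmetic point: your case split at $N=0$ is unnecessary. Since $N>\rho|k_j|/e-1$ always, you have $e^{-N}\le e\cdot e^{-\rho|k_j|/e}$, including $N=0$ with $0^0=1$. So the bound $e\operatorname{Vol}(\Omega)\|Q\|_{L^\infty(\Omega_\varepsilon)}e^{-\rho|k|/(e\sqrt n)}$ holds for every $\rho<\varepsilon$, and you can let $\rho\to\varepsilon$ once at the end.
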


\begin{proof}
We have $(-\Delta)(e^{-ik\cdot x}) = |k|^{2} e^{-ik\cdot x}$. Since $Q$ is periodic in $\Omega = (-\pi, \pi)^n$, we may consider the integral \eqref{eq:Fourier_mode_analytic_decay} to be on a torus. By integrating by parts $N=N(k)$ times, we have 
\begin{equation*}
\int_\Omega e^{-ik \cdot x} Q \, dx
= |k|^{-2N} \int_\Omega e^{-ik \cdot x} (-\Delta)^N Q \, dx. 
\end{equation*}
Here, all the boundary terms resulting from integration by parts vanish since $Q$ is periodic. Thus, it follows that 
\begin{align*}
    \Bigl|\int_\Omega e^{-ik\cdot x} Q \, dx\Bigr| 
    &\leq |k|^{-2N} n^{N} \operatorname{Vol}(\Omega) 
       \max_{|\alpha|=2N} \|D^\alpha Q\|_{L^\infty(\Omega)}.
\end{align*}
We now estimate the right-hand side via Cauchy estimates. 
The standard Cauchy estimates applied to $Q$ on $\Omega$ give
\begin{align*}
    \|D^\alpha Q\|_{L^\infty(\Omega)}
    \leq \|Q\|_{L^\infty(\Omega_\varepsilon)} 
       \varepsilon^{-|\alpha|} |\alpha|^{|\alpha|},
\end{align*}
for all $\alpha\in \mathbb{N}^n$, where $\|Q\|_{L^\infty(\Omega_\varepsilon)}$ 
is the $L^\infty$ norm of the holomorphic extension of $Q$ to a 
complex neighborhood of $\Omega$ of radius $\varepsilon$.

Substituting this into the first term, and choosing $N \geq 0$ to be an integer with $\frac{1}{2} n^{-1/2}e^{-1}\varepsilon |k| - 1 \leq N \leq \frac{1}{2} n^{-1/2}e^{-1}\varepsilon |k|$ yields the following \begin{align*}
|k|^{-2N}n^{N}\max_{|\alpha|=2N}\|D^\alpha Q\|_{L^\infty(\Omega)} &\leq |k|^{-2N}n^{N}\|Q\|_{L^\infty(\Omega_\varepsilon)} \varepsilon^{-2N}(2N)^{2N} \\
&\leq |k|^{-2N}n^{N}\|Q\|_{L^\infty(\Omega_\varepsilon)}\varepsilon^{-2N}\left(n^{-1/2}e^{-1}\varepsilon|k|\right)^{2N} \\
&= \|Q\|_{L^\infty(\Omega_\varepsilon)} e^{-2 N} \leq e^2 \|Q\|_{L^\infty(\Omega_\varepsilon)} e^{- \frac{\varepsilon|k|}{e \sqrt{n}} }. \qedhere
\end{align*}
\end{proof}

\begin{remark}
If \(Q\) is not periodic, integration by parts produces boundary terms 
that do not vanish. The boundary sum contains terms of the form 
\(|k|^{-m-1} \max_{|\alpha|=m} \|D^\alpha Q\|_{L^\infty(\partial\Omega)}\) 
for \(m = 0, \dots, N-1\), which cannot all be made 
simultaneously small by a single choice of \(N\). This prevents exponential decay. 
For our purposes, this could likely be remedied by deriving a stability 
estimate for boundary determination in the analytic case, but we leave 
this to future work. 
\end{remark}
We next record an elementary statement for brevity, since it is used in the following proofs of Theorem \ref{thm:analytic_case} and \ref{thm:stretched_exponential}.

\begin{lemma}\label{annulus_decomposition}
Let $n\geq 1$, $a>0$, and $p> 0$. There exists a constant
$C=C(n,a,p)>0$ such that, for every $M\geq 1$, 
\[
\sum_{k\in \mathbb{Z}^n,\ |k|>M}
e^{-a|k|^p}
\leq C M^n e^{-aM^p}.
\]
\end{lemma}
\begin{proof}
    We begin the proof by defining the family of shells $\{A_j\}_{j\in \mathbb{N}}$ with the purpose of partitioning $\{|k|>M\}$ into shells of thickness $2^jM$. Such shells are given by 
\begin{align*}
    A_j = \{k\in \mathbb{Z}^n\mid 2^j M<|k|\leq 2^{j+1}M\}
\end{align*}
By definition, $A_j\cap A_i=\emptyset$ for $j\neq i$. Enclosing $A_j$ in an $n$-cube of width $2^{j+2}M$ results in the existence of some constant $C_1=C_1(n)>0$ such that 
\[
    |A_j|\leq C_12^{(j+1)n}M^n
\]
which results in the following chain of inequalities
\begin{align*}
        \sum_{|k|>M} e^{-a|k|^p}\leq \sum_{j=0}^\infty |A_j|e^{-a2^{pj}M^p}\leq C_1M^n\sum_{j=0}^\infty 2^{(j+1)n}e^{-a2^{pj}M^p}.
\end{align*}
It should be noted that the first inequality follows since $k\in A_j$ and thus $|k|>2^jM$. Now, we see that since $M\geq 1$ (and thus $M^p\geq 1$)
\begin{align*}
    \sum_{j=0}^\infty 2^{(j+1)n}e^{-a2^{pj}M^p} 
    &=e^{-aM^p}\sum_{j=0}^\infty2^{(j+1)n}e^{-aM^p(2^{pj}-1)}\\ &\leq e^{-aM^p}\sum_{j=0}^\infty2^{(j+1)n}e^{-a(2^{pj}-1)}
\end{align*}
The right-most sum converges and is independent of $M$ (and depends only on $a,p$ and $n$). Thus, we have the required estimate.
\end{proof}

\begin{remark}
It is likely that in the estimate of Lemma \ref{annulus_decomposition} one could replace $M^n$ by $M^{n-1}$, but we will not need this.
\end{remark}

\begin{proof}[Proof of Theorem \ref{thm:analytic_case}]
We begin by recalling the Alessandrini identity 
\begin{align*}
    ((\Lambda_q-\Lambda_{q'})(u|_{\p\Omega}), \bar{u}'|_{\p\Omega})_{\p\Omega}=\int_\Omega Quu' dx
\end{align*}
which is valid for $u,u'\in H^1(\Omega)$ solving $(-\Delta+q)u=(-\Delta+q')u'=0$ in $\Omega$. Unlike section \ref{sec_bandlimited} where real geometrical optics solutions are chosen, we will use CGO solutions with suitably chosen phases. In particular, given $k\in \mathbb{Z}^n$, we choose complex vectors $\rho,\, \rho'\in \mathbb{C}^n$ that are given by 
\begin{align*}
    \rho = \eta + i\left(-\dfrac{k}{2}+\tau \omega\right),\qquad \rho' = -\eta+i\left(-\dfrac{k}{2}-\tau \omega \right)
\end{align*}
where $\eta,\, \omega\in \mathbb{R}^n$ such that $k\perp\eta,\ \eta \perp\omega,\ k\perp \omega$ and
\[
|\omega|=1,\qquad |\eta|^2=\dfrac{|k|^2}{4}+\tau^2.
\]
According to \cite{feldman_calderproblem_nodate}, there are $H^2(\Omega)$ solutions of $(-\Delta+q)u=(-\Delta+q')u'=0$ in $\Omega$, having the form 
\begin{align*}
    u = e^{\rho\cdot x}(1+r), \qquad u' = e^{\rho'\cdot x} (1+r')
\end{align*}
with $\|r\|_{H^s(\Omega)}\leq \tau^{s-1}\|q\|_{L^2(\Omega)}$ for $s=0,1,2$ (a similar estimate holds for $r'$). Now, substituting $u$ and $u'$ into Alessandrini's identity results in 
\begin{align*}
       \left|\int_\Omega e^{-ik\cdot x}Q dx\right| \leq \|u\|_{L^2(\p\Omega)}\|u'\|_{L^2(\p\Omega)}\|\Lambda_q-\Lambda_{q'}\|_{L^2(\p\Omega)\rightarrow L^2(\p\Omega)}\\+\left|\int_\Omega e^{-ik\cdot x}Q(r+r'+rr')dx\right|.
\end{align*}
First, we look to bound $\|u\|_{L^2(\p\Omega)}$ (and $\|u'\|_{L^2(\p\Omega)}$) in Alessandrini's identity. Since $\Omega = (-\pi,\pi)^n\subset B(0,\pi\sqrt{n})$, this is given by 
\begin{align*}
    \|u\|_{L^2(\p\Omega)} &= \|e^{\rho\cdot x}(1+r)\|_{L^2(\p\Omega)} \leq e^{\pi\sqrt{n|k|^2/4+n\tau^2}} \norm{1+r}_{L^2(\p \Omega)} \\ 
    &\leq e^{\pi\sqrt{n|k|^2/4+n\tau^2}} C_{n,\Omega}(1 + \norm{r}_{H^1(\Omega)}) \\
    &\leq C_{n,\Omega, E} e^{\pi\sqrt{n|k|^2/4+n\tau^2}}.
\end{align*}
A similar estimate holds for $\|u'\|_{L^2(\p\Omega)}$. Substituting these into Alessandrini's identity yields 
\begin{multline}\label{real_analytic_stability_est1}
\left|\int_\Omega e^{-ik\cdot x}Qdx\right| \leq C_{n,\Omega, E} e^{2\pi\sqrt{n|k|^2/4+n\tau^2}}\|\Lambda_q-\Lambda_{q'}\| \\
 +\left|\int_\Omega e^{-ik\cdot x}Q(r+r'+rr')dx\right|.
\end{multline}

Next, as in the proof of Theorem~\ref{thm_main1_omega}, we estimate the remainder terms using Parseval's identity. Since $r, r' \in L^2(\Omega)$ and $rr'\in L^1(\Omega)$, their Fourier coefficients $r_k = \int_{\Omega} e^{-ik \cdot x} r \,dx$ etc are well-defined (whether or not the functions are periodic), and inserting $Q = \sum Q_l e^{il \cdot x}$ which converges by Lemma \ref{Fourier_restriction_decay} yields 
\begin{align}\label{alessandrini_remainder_bound}
\left|\int_\Omega e^{-ik\cdot x}Q(r+r'+rr')\right|&\leq \sum_{l\in \mathbb{Z}^n} |Q_l(r+r'+rr')_{k-l}|.
\end{align}
We can estimate the first (and second) term on the right-hand side as follows
\begin{align*}
\sum_{l\in \mathbb{Z}^n} |Q_lr_{k-l}|&\leq \sup_{|l|\leq M}|Q_l| \sum_{|l|\leq M} |r_{k-l}|+\sum_{|l|>M}|Q_lr_{k-l}|\\ 
&\leq CM^{n/2} \|r\|_{L^2(\Omega)}\sup_{|l|\leq M}|Q_l|+ C\|r\|_{L^2(\Omega)}\left(\sum_{|l|>M}|Q_l|^2\right)^{1/2}\\ 
&\leq C\tau^{-1}\left(M^{n/2}\sup_{|l|\leq M}|Q_l|+\left(\sum_{|l|>M}|Q_l|^2\right)^{1/2}\right).
\end{align*}
Here $C$ is a dimensional constant. For the term involving $rr'$, we estimate $|(rr')_m| \leq \|rr'\|_{L^1(\Omega)}$ and apply the Cauchy-Schwarz inequality as in the proof of Theorem~\ref{thm_main1_omega}. This gives
\begin{align*}
    \sum_{l\in \mathbb{Z}^n} |Q_l(rr')_{k-l}|\leq \sum_{l\in \mathbb{Z}^n}|Q_l|\|rr'\|_{L^1(\Omega)}\leq
 \sum_{l\in \mathbb{Z}^n}|Q_l| \|r\|_{L^2(\Omega)}\|r'\|_{L^2(\Omega)}.
\end{align*}
Thus, it follows that 
\begin{align*}
    \sum_{l\in \mathbb{Z}^n} |Q_l(rr')_{k-l}| \leq C\tau^{-2} \sum_{l\in \mathbb{Z}^n}|Q_l|\leq C\tau^{-2}\left(M^n\sup_{|l|\leq M}|Q_l|+\sum_{|l|>M}|Q_l|\right).
\end{align*}
Substituting these estimates into \eqref{alessandrini_remainder_bound} results in 
\begin{multline*}
        \left|\int_\Omega e^{-ik\cdot x}Q(r+r'+rr')dx\right| \leq C(\tau^{-1}M^{n/2}+\tau^{-2}M^n)\sup_{|l|\leq M}|Q_l|\\+C\tau^{-1}\left(\sum_{|l|>M}|Q_l|^2\right)^{1/2}+C\tau^{-2}\sum_{|l|>M}|Q_l|. 
\end{multline*}
From here, the periodicity and real-analyticity of $Q$ allows us to invoke Lemma \ref{Fourier_restriction_decay}. That is, for all $l\in \mathbb{Z}^n$, 
\begin{align*}
    |Q_l|\leq Ce^{-\sigma |l|} \quad \text{for some} \;\ \sigma>0.
\end{align*}
where $C=C(n,\Omega,E)>0$. Upon substituting this into \eqref{real_analytic_stability_est1}, we have that 
\begin{multline*}
        \left|\int_\Omega e^{-ik\cdot x}Q(r+r'+rr')dx\right| \leq C(\tau^{-1}M^{n/2}+\tau^{-2}M^n)\sup_{|l|\leq M}|Q_l|\\+C\tau^{-1}\left(\sum_{|l|>M}e^{-2\sigma|l|}\right)^{1/2}+C\tau^{-2}\left(\sum_{|l|>M}e^{-\sigma|l|}\right). 
\end{multline*}
Here, we can utilise Lemma \ref{annulus_decomposition} to estimate the final two terms on the right-hand side. Namely, we take $a=2\sigma,\ p=1$ in the first term and $a=\sigma,\ p=1$ in the second term. Thus it follows that
\begin{multline*}
        \left|\int_\Omega e^{-ik\cdot x}Q(r+r'+rr')dx\right| \leq C(\tau^{-1}M^{n/2}+\tau^{-2}M^n)\sup_{|l|\leq M}|Q_l|\\+\widetilde{C}_1\tau^{-1}M^{n/2}e^{-\sigma M}+\widetilde{C}_1\tau^{-2}M^{n}e^{-\sigma M}. 
\end{multline*}
Where $\widetilde{C}_1=\widetilde{C}_1(n,\sigma,C)>0$. Now, we choose $\tau=\kappa M^{n/2}$ where $\kappa>0$ such that 
\begin{align}\label{coupling_constant_cond}
    \kappa^{-1}+\kappa^{-2}<\dfrac{1}{2C}.
\end{align}
With this selection of $\tau$, the above estimate simplifies to 
\begin{align*}
    \left|\int_\Omega e^{-ik\cdot x}Q(r+r'+rr')dx\right| \leq \dfrac{1}{2}\sup_{|l|\leq M}|Q_l|+\widetilde{C}_1\left(\kappa^{-1}+\kappa^{-2}\right)e^{-\sigma M}.
\end{align*}
Since $M>0$, the coupling constant condition \eqref{coupling_constant_cond} implies that 
\begin{align*}
    \left|\int_\Omega e^{-ik\cdot x}Q(r+r'+rr')dx\right| \leq \dfrac{1}{2}\sup_{|l|\leq M}|Q_l|+\widetilde{C}_2e^{-\sigma M}.
\end{align*}
with $\widetilde{C}_2=\widetilde{C}_2(\widetilde{C}_1)>0$. Thus, after substituting the above estimate into \eqref{real_analytic_stability_est1}, taking the supremum over $|k|\leq M$ and absorbing the first term of the remainder into the left-hand side, we have that 
\begin{align}
    \sup_{|k|\leq M}|Q_k|\leq C e^{\gamma M^{n/2}}\|\Lambda_q-\Lambda_{q'}\|_{L^2(\p\Omega)\rightarrow L^2(\p\Omega)}+\widetilde{C}_2 e^{-\sigma M},
\end{align}
where $\gamma = 2\pi\sqrt{n\kappa^2+n/4}$ with $C=C(n,\Omega,E_1,E_2)>0$.



We then look to balance the two terms on the right-hand side. That is, if $\delta = \|\Lambda_q-\Lambda_{q'}\|_{L^2(\p\Omega)\rightarrow L^2(\p\Omega)}$, we choose $M = M(\delta)$ to satisfy
\begin{align*}
    e^{\gamma M^{n/2}} \delta = e^{-\sigma M} \iff \gamma M^{n/2}+\sigma M=\log(1/\delta).
\end{align*}
For any $\delta < 1$, the right-hand side is positive. Since $n\geq 3$, the function $f(M) = \gamma M^{n/2} + \sigma M$ is strictly increasing on $[0,\infty)$ with $f(0)=0$ and $f(M)\to\infty$ as $M\to\infty$. Hence there exists a unique solution $M = M(\delta) > 0$, and $M(\delta) \to \infty$ as $\delta \to 0$. Moreover, we have $M(\delta)^{n/2} \geq M(\delta)$ for all $\delta$ small enough. Therefore
\[
\gamma M(\delta)^{n/2} + \sigma M(\delta) \leq (\gamma + \sigma) M(\delta)^{n/2}.
\]
Combining this with the balancing condition yields the explicit bound 
\[
M(\delta)\geq \left(\frac{1}{\gamma+\sigma}\log(1/\delta)\right)^{2/n}.
\]
Thus, we have
\[
e^{-\sigma M(\delta)} \leq \exp\left(-\sigma \left(\frac{1}{\gamma+\sigma}\log(1/\delta)\right)^{2/n}\right).
\]
Substituting this into the estimate yields, for all sufficiently small $\delta$,
\begin{align*}
    \sup_{|k|\leq M(\delta)} |Q_k| \leq C \exp\left(-\sigma \left(\dfrac{1}{\gamma+\sigma}\log(1/\delta)\right)^{2/n}\right),
\end{align*}
where $C = C(\Omega, n, E_1, E_2) > 0$.
\end{proof}

\section{Potentials with super-exponential Fourier decay}\label{sec_super_exponential}

In this section, we strengthen the decay assumption on the Fourier coefficients of the difference $Q = q - q'$ compared to the real-analytic case of Section~\ref{sec_real_analytic}. We assume a stretched-exponential decay of order $n/2$, namely
\begin{equation}\label{eq:stretched_decay}
|Q_k| \leq C e^{-c |k|^{n/2}}, \qquad k \in \mathbb{Z}^n,
\end{equation}
for some $C, c > 0$. Since $n \geq 3$, this decay is super-exponential and thus stronger than the exponential decay $e^{-\sigma |k|}$ characteristic of real-analytic functions. As a consequence, we obtain H\"older stability for the low Fourier modes, improving upon the sub-H\"older estimate of Theorem~\ref{thm:analytic_case}.




\begin{proof}[Proof of Theorem \ref{thm:stretched_exponential}]
The proof follows the same lines as Theorem~\ref{thm:analytic_case}. We use CGO solutions as in that proof with complex phases depending on a parameter $\tau > 0$. From the Alessandrini identity and the bounds on the CGO solutions, we obtain for any $M \geq 1$ and any $\tau > 0$,
\begin{multline}\label{eq:stretched_master}
\sup_{|k| \leq M} |Q_k| \leq C_{n,\Omega, E} e^{2\pi\sqrt{n|k|^2/4+n\tau^2}} \delta \\ 
+ C(\tau^{-1} M^{n/2}
 + \tau^{-2} M^n) \sup_{|k| \leq M} |Q_k| 
+ R_{\text{tail}}(M),
\end{multline}
where $R_{\text{tail}}(M)$ contains the  contributions
\[
R_{\text{tail}}(M) = C\tau^{-1}\Big(\sum_{|l| > M} |Q_l|^2\Big)^{1/2} + C\tau^{-2} \sum_{|l| > M} |Q_l|,
\]
and $\delta = \|\Lambda_q - \Lambda_{q'}\|_{L^2(\partial\Omega) \to L^2(\partial\Omega)}$.

We now choose $\tau = \kappa M^{n/2}$ with $\kappa > 0$ as in Theorem~\ref{thm:analytic_case}. Then, as in that proof, the first term in \eqref{eq:stretched_master} is bounded by $C e^{\gamma M^{n/2}} \delta$ and the coefficient in the second term is $C(\kappa^{-1} + \kappa^{-2}) \leq 1/2$.

It remains to estimate $R_{\text{tail}}(M)$. Using the decay condition \eqref{eq:stretched_decay} and Lemma \ref{annulus_decomposition}, we can set $a=2c,\ p=n/2$ for the first summation and $a=c,\ p=n/2$ in the second summation in $R_{\mathrm{tail}}(M)$ to obtain 
\begin{align*}
    \sum_{|l|>M} |Q_l|^2 \leq C_1M^ne^{-2cM^{n/2}}, \quad
    \sum_{|l|>M} |Q_l| \leq C_2M^n e^{-cM^{n/2}}
\end{align*}
Substituting $\tau = \kappa M^{n/2}$, the tail contributions become
\begin{align*}
R_{\text{tail}}(M) &\leq C_1\kappa^{-1} e^{-c M^{n/2}} + C_2\kappa^{-2} e^{-c M^{n/2}} \\ &\leq C'(\kappa^{-1}+\kappa^{-2}) e^{-c M^{n/2}}
\end{align*}
where $C'=\max(C_1,C_2)>0$. Therefore
\[
R_{\text{tail}}(M) \leq C_3 e^{-c M^{n/2}}.
\]
where $C_3=C_3(C',C)>0$.
Inserting these estimates into \eqref{eq:stretched_master} and absorbing the second term into the left-hand side yields
\[
\sup_{|k| \leq M} |Q_k| \leq C e^{\gamma M^{n/2}} \delta + C_3 e^{-c M^{n/2}}.
\]

We balance the two terms on the right-hand side by choosing $M = M(\delta)$ so that
\[
e^{\gamma M^{n/2}} \delta = e^{-c M^{n/2}}.
\]
This is equivalent to
\[
e^{(\gamma + c) M^{n/2}} = \delta^{-1},
\]
which gives
\[
M^{n/2} = \frac{1}{\gamma + c} \log(1/\delta).
\]
With this choice, both terms are equal to
\[
e^{-c M^{n/2}} = \exp\left(-\frac{c}{\gamma + c} \log(1/\delta)\right) = \delta^{\frac{c}{\gamma + c}}.
\]
Thus we obtain
\[
\sup_{|k| \leq M(\delta)} |Q_k| \leq C \delta^{\frac{c}{\gamma + c}},
\]
where
\[
M(\delta) = \left( \frac{1}{\gamma + c} \log(1/\delta) \right)^{2/n}
\]
is the number of Fourier modes that can be stably recovered. This completes the proof with $\alpha = c/(\gamma + c)$.
\end{proof}

\begin{remark}We see that $M(\delta) \to \infty$ as $\delta \to 0$, meaning that the smaller the difference of the DN maps, the more Fourier coefficients of $Q$ can be H\"older stably reconstructed. 

The H\"older exponent $\alpha = c/(\gamma + c)$ in Theorem~\ref{thm:stretched_exponential} depends on the decay rate $c$ of the Fourier coefficients and on the geometric constant $\gamma$. A larger $c$ (faster decay) yields a larger $\alpha$, approaching $1$ as $c \to \infty$. Compared to Theorem~\ref{thm:analytic_case}, the super-exponential decay \eqref{eq:stretched_decay}  yields a stronger stability estimate, upgrading the sub-H\"older modulus to a genuine H\"older modulus.
\end{remark}

\begin{remark}[Bandlimited difference]
If the difference $Q = q - q'$ is $N$-bandlimited for some fixed $N \in \mathbb{N}$ as in \eqref{eq:bandlimited_def}, then the tail term $R_{\text{tail}}$ in the proof of Theorem~\ref{thm:stretched_exponential} vanishes identically. In this case, the argument simplifies and yields the Lipschitz stability estimate
\[
\sup_{|k| \leq N} |Q_k| \leq C e^{\gamma N^{n/2}} \|\Lambda_q - \Lambda_{q'}\|_{L^2(\partial\Omega) \to L^2(\partial\Omega)},
\]
where the constants $C, \gamma$ depend only on $\Omega, n$, and the a priori bounds on $\|q\|_{L^\infty}$ and $\|q'\|_{L^\infty}$. No smallness condition on $\|\Lambda_q - \Lambda_{q'}\|$ is required.

This estimate is comparable to known results such as \cite[Theorem~2]{alberti2022calderon}, with the difference that the constant here grows as $e^{cN^{n/2}}$ rather than $e^{cN^{1/2}}$, but in return only the difference $q-q'$ is assumed to be $N$-bandlimited. The individual potentials $q$ and $q'$ need not belong to any finite-dimensional subspace.
\end{remark}

\appendix

\section{Difference of DN maps on $L^2(\p \Omega)$} \label{sec_difference_dn_maps}

In this section we show that the difference of DN maps has finite norm on $L^2(\p \Omega)$. If the coefficients are smooth, this follows from the fact that $\Lambda_{q_1}(\lambda)$ and $\Lambda_{q_2}(\lambda)$ are pseudodifferential operators of order one with the same principal symbol, and hence the difference is a pseudodifferential operator of order $0$. The next result shows that the same holds with low regularity coefficients.

\begin{lemma} \label{lemma_dnmap_difference_ltwo}
Let $\Omega \subset \R^n$ be a bounded domain with smooth boundary, let $\lambda \geq 0$, and let $q_1, q_2 \in L^{\infty}(\Omega)$ be such that the DN maps $\Lambda_{q_j}(\lambda)$ are well defined. Then 
\[
\norm{\Lambda_{q_1}(\lambda) - \Lambda_{q_2}(\lambda)}_{L^2 \to L^2} \leq C \norm{q_1-q_2}_{L^{\infty}(\Omega)}.
\]
\end{lemma}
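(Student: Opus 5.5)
We argue by duality via the Alessandrini identity together with an $H^{1/2}$-regularity gain from $L^2$ boundary data. For $f,g \in H^{1/2}(\p\Omega)$, let $u_1$ solve $(-\Delta-\lambda^2+q_1)u_1=0$ with $u_1|_{\p\Omega}=f$, and let $u_2$ solve $(-\Delta-\lambda^2+\bar q_2)u_2=0$ with $u_2|_{\p\Omega}=g$. The identity \eqref{bandlimited_alessandrini} gives
\[
((\Lambda_{q_1}(\lambda)-\Lambda_{q_2}(\lambda))f, g)_{\p\Omega} = \int_\Omega (q_1-q_2) u_1 \bar u_2 \,dx .
\]
Hence
\[
|((\Lambda_{q_1}-\Lambda_{q_2})f,g)| \leq \norm{q_1-q_2}_{L^\infty}\norm{u_1}_{L^2(\Omega)}\norm{u_2}_{L^2(\Omega)}.
\]
It therefore suffices to show $\norm{u_j}_{L^2(\Omega)} \leq C\norm{u_j|_{\p\Omega}}_{L^2(\p\Omega)}$ for $H^{1/2}$ data. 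Density of $H^{1/2}$ in $L^2(\p\Omega)$ then gives the operator bound. Well-posedness for $\bar q_2$ is automatic, since the Dirichlet problem for $\bar q_2$ is the complex conjugate of that for $q_2$.

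The key step is the estimate $\norm{u}_{L^2(\Omega)}\le C\norm{f}_{L^2(\p\Omega)}$ for solutions of $(-\Delta-\lambda^2+q)u=0$, $u|_{\p\Omega}=f$. I would prove it by transposition. Given $F\in L^2(\Omega)$, let $w\in H^2(\Omega)\cap H^1_0(\Omega)$ solve the adjoint problem $(-\Delta-\lambda^2+\bar q)w=F$, $w|_{\p\Omega}=0$. This solution exists because $0$ is not a Dirichlet eigenvalue, which is exactly the well-definedness assumption. Elliptic $H^2$ regularity on the smooth domain, with $L^\infty$ potential as a lower-order term, gives $\norm{w}_{H^2}\le C\norm{F}_{L^2}$, and the trace theorem then gives $\norm{\p_\nu w}_{L^2(\p\Omega)}\leq C\norm{F}_{L^2}$. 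Green's formula, valid for $u\in H^1$ and $w\in H^2$, yields
\[
\int_\Omega u\bar F\,dx = -\int_{\p\Omega} f\,\overline{\p_\nu w}\,dS,
\]
so $|(u,F)|\le C\norm{f}_{L^2(\p\Omega)}\norm{F}_{L^2}$. Taking the supremum over $F$ gives the desired bound.

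The main technical point is the uniform $H^2$ estimate for the Dirichlet problem with $L^\infty$ potential. I would derive it from the Fredholm alternative: the $H^1_0$ solution exists with $\norm{w}_{H^1}\le C\norm F_{L^2}$ by invertibility. Then $-\Delta w = F+(\lambda^2-\bar q)w\in L^2$, and standard $H^2$ regularity for the Laplacian on smooth domains applies. The constant $C$ depends on $q_1,q_2,\lambda,\Omega$, which is all the lemma asserts.
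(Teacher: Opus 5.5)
Your proof is correct. It shares the paper's key step: the transposition bound $\norm{u}_{L^2(\Omega)}\lesssim\norm{f}_{L^2(\p\Omega)}$, obtained from the adjoint Dirichlet problem, $H^2$ regularity and the trace of the normal derivative. The difference is in how you handle $\Lambda_{q_1}(\lambda)-\Lambda_{q_2}(\lambda)$.

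The paper writes $(\Lambda_{q_1}(\lambda)-\Lambda_{q_2}(\lambda))f=\p_\nu w$, where $w=u_1-u_2\in H^2(\Omega)\cap H^1_0(\Omega)$ solves $(-\Delta-\lambda^2+q_1)w=(q_2-q_1)u_2$. It then applies the $H^2$ estimate and the trace theorem to $w$. This needs the transposition bound only for $u_2$, requires no identification step, and in fact shows that the difference maps $L^2(\p\Omega)$ boundedly into $H^{1/2}(\p\Omega)$.

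You instead bound the sesquilinear form via the Alessandrini identity, using the transposition bound for two solutions (for $q_1$ and for $\bar q_2$). You then need the density/Riesz step to recognize $(\Lambda_{q_1}(\lambda)-\Lambda_{q_2}(\lambda))f\in H^{-1/2}(\p\Omega)$ as an $L^2$ function. This is symmetric and avoids the elliptic estimate for $w$, but gives only the $L^2\to L^2$ bound.

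Your conjugations, using $\bar q$ in the adjoint problem and $\bar q_2$ for $u_2$, are exactly what complex-valued potentials require. The paper's identity \eqref{bandlimited_alessandrini} and its duality formula $(u_2,v)_{L^2(\Omega)}=-(f,\p_\nu\varphi)_{L^2(\p\Omega)}$ are written with $q'$ and $q_2$ unconjugated, and in that form they hold only for real potentials.
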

\begin{proof}
Fix $f \in H^{1/2}(\p \Omega)$ and let $u_j$ solve $(-\Delta-\lambda^2+q_j) u_j = 0$ in $\Omega$ with $u_j|_{\p \Omega} = f$. Then $w = u_1-u_2$ solves 
\[
(-\Delta-\lambda^2+q_1)w = (q_2-q_1)u_2 \text{ in $\Omega$}, \qquad w|_{\p \Omega} = 0.
\]
Note that $(\Lambda_{q_1}(\lambda) - \Lambda_{q_2}(\lambda)) f = \p_{\nu} w|_{\p \Omega}$. By trace and elliptic estimates (where we can afford to lose a bit of derivatives) 
\begin{align*}
\norm{\p_{\nu} w}_{L^2(\p \Omega)} &\lesssim \norm{w}_{H^2(\Omega)} \lesssim \norm{q_2-q_1}_{L^{\infty}(\Omega)} \norm{u_2}_{L^2(\Omega)}.
\end{align*}

It remains to show that $\norm{u_2}_{L^2(\Omega)} \lesssim \norm{f}_{L^2(\p \Omega)}$. This is a standard duality argument. Let $v \in L^2(\Omega)$, and let $\varphi \in H^2(\Omega)$ solve 
\[
(-\Delta-\lambda^2+q_2)\varphi = v \text{ in $\Omega$}, \qquad \varphi|_{\p \Omega} = 0.
\]
Then integration by parts and the equation for $u_2$ give 
\[
(u_2, v)_{L^2(\Omega)} = (u_2, (-\Delta-\lambda^2+q_2)\varphi)_{L^2(\Omega)} = -(f, \p_{\nu} \varphi)_{L^2(\p \Omega)}.
\]
By trace and elliptic estimates, 
\[
|(u_2, v)_{L^2(\Omega)}| \lesssim \norm{f}_{L^2(\p \Omega)} \norm{\varphi}_{H^2(\Omega)} \lesssim \norm{f}_{L^2(\p \Omega)} \norm{v}_{L^2(\Omega)},
\]
which gives the required result.
\end{proof}

\printbibliography 
\end{document}